\newtheoremstyle{break}
{}
{}
{\itshape}
{}
{\bfseries}
{\ }
{0pt}
{}
\theoremstyle{break}
\newtheorem{prop}{\textbf{Proposition}}[section]
\newtheorem*{theorem*}{Theorem}
\newtheorem{definition}{\textbf{Definition}}[section]
\newtheorem{theorem}{\textbf{Theorem}}[section]
\newtheorem{corollary}{\textbf{Corollary}}[section]
\newtheorem{claim}{\underline{\textbf{Claim}}}[section]
\newcommand{\R}{\mathbb{R}}
\newcommand{\N}{\mathbb{N}}
\newcommand{\abs}[1]{\lvert#1\rvert }
\begin{document}
	\title{Sufficent Conditions for the preservation of Path-Connectedness in an arbitrary metric space}
	\author{S. Andronicou and E. Milakis}
	\date{}
	\maketitle
	\begin{abstract}
		It is proven that if $ (X,d) $ is an arbitrary metric space and $ U $ is a path-connected subset of $ X $ with $M:=\{x_i:\ i\in\{1,2,\dots,k\}\}\subset int(U) $, then the property of path-connectedness is also preserved in the resulting set $ U\setminus M, $ provided that the boundary of each open ball of X is a non-empty and path-connected set. Moreover, under appropriate conditions we extend the above result in the case where the set $ M $ is countably infinite. As a consequence these results maintain path-connectedness for domains with holes.
		
	\end{abstract}
	
	\textbf{Mathematics Subject Classification:} 54D05, 30L99
	
	\textbf{Keywords:} Path-connectedness, arbitrary metric spaces	
	 
	 \section{Introduction}
	The assumptions of connectedness and path-connectedness play a critical role when studying questions in Analysis, Topology and Partial Differential Equations. These conditions are present in a variety of theorems and applications, from regularity in free boundary problems, stability questions in fractured domains and derivation of interpolation inequalities to global bifurcation theory and the structure of solutions sets to vector equilibrium problems. For instance in the study of free boundaries in variational inequalities one seeks regularity of domains with finite perimeter in the sense of Caccioppoli and De-Giorgi (\cite{C52}, \cite{DG54}) while these domains verify the condition that the interior of its complement is composed by finite many connected components (see \cite{CR76}). In another occasion \cite{LL22}, the authors obtain geometric properties of nodal domains of solutions to certain uniformly elliptic equations. Again, the assumption of path-connectedness to these nodal domains leads to major impeccable results for instance when trying to obtain uniform bounds on frequency functions for solutions with same zero set. In general, there is a vast bibliography in these topics but we could, at least and additionally to the above, refer to  \cite{G07}, \cite{LM11}, \cite{LMS13}, \cite{YY23}, \cite{T21} and \cite{ZHW09}.
	
	 Having in mind the above mentioned topics, one ends up with an abstract question regarding path-connectedness.  
	 	 
	 \textit{Question: Assume that $ (X,d) $ is an arbitrary metric space and $ U $ is a path-connected subset of $ X $. If one removes a subset $M$ from $U$, is it true that resulting set $ U\setminus M $ becomes also path-connected?} 
	 
	 Although this question seems to have a fair answer for Euclidean settings, the case of arbitrary metric space appears a bit more tricky. To the best of our knowledge, the question remains open even in the case when $M$ is finite or countably infinite set of points. The main purpose of the present article is to fill exactly this gap, giving the proofs in full details. We prove that if $ (X,d) $ is an arbitrary metric space and $ U $ is a path-connected subset of $ X $ with $M:=\{x_i:\ i\in\{1,2,\dots,k\}\}\subset int(U) $, then the resulting set $ U\setminus M$ is also path-connected,  provided that the boundary of each open ball of X is a non-empty and path-connected set. Moreover, under appropriate conditions, the above result holds true in the case where the set $ M $ is countably infinite. Although this question has an abstract nature and is of independent interest across several topics of Mathematical Analysis, the motivation of the present paper is a bit more focused. In a forthcoming article \cite{AM24}, we are establishing properties of nodal domains for solutions to general elliptic type equations that include corkscrew conditions, Carleson type estimates and boundary Harnack inequalities. These estimates play a crucial role, for instance, in the study of two phase free boundary problems in which the positive and negative parts of a solution satisfy two (perhaps different) elliptic equations and the free boundary condition  involves normal derivatives from positive and negative sides. The proofs follow the fundamental ideas of \cite{LL22} and when one tries to reproduce the proof of Theorem 5.1 of \cite{LL22} the corresponding elements in the partition of the nodal domains need to stay path-connected, a condition which leads to the exact question we have imposed above.

	 The structure of the paper is as follows. In Section \ref{sec2} we provide the appropriate definitions, notations and present the main results while Section \ref{sec3} is devoted to the proofs of the main theorems.  
	
	
	\section{Definitions and Main Results}\label{sec2}

	\subsection{Definitions}
	In this section we give some necessary definitions and results for connected and path-connected metric spaces (for more details see \cite{BBT08}, \cite{E89}, \cite{F99}).
	\begin{definition}[Connected space] 
	A metric space\footnote{ In this paper, when we refer to the notion of a metric space $ (X,d) $, we mean a pair of objects $ (X,d), $ where the underlying set $ X $ is an an \emph{arbitrary} set (it could also be the empty set) and $ d $ is a map that fulfills the axioms of a metric. In bibliography, some authors exclude the empty metric space. We insist on using the more general definition. } $ \left( X,d\right)  $ is called \emph{connected} if there is no open partition i.e there is no pair of sets $ \{A,B\} $ such that the following holds:
		\begin{gather}
			(i)\ X=A\cup B,\ (ii) \ A,B\neq\emptyset,\ (iii)\  A\cap B=\emptyset,\ (iv)\ A, B\ \text{are open.}
		\end{gather}
		\end{definition}
	\begin{definition}[Path-connected space]
		A metric space $ \left( X,d\right)  $ is called \emph{path-connected} if for every $ x,y\in X,\ x\neq y $ there exists a continuous map $ \gamma:I\subset\R\to X $ where $ I $ is an interval, such that $ x,y\in\gamma(I). $
	\end{definition}
To be more specific, when we refer that a genuine subset $ A $ of $X$ fulfills the property of connectedness (path-connectedness), we mean that the metric subspace $ \left( A,d_A\right)  $ is connected (path-connected). We see that the notion of path-connectedness is stronger than then notion of connectedness. For example, the topologist's sine curve, i.e the set $ \{(0,0)\}\cup Gr(f) $ where $ f:(0,1]\to\R,\ f(x):=sin(1/x) $. In specific this set is connected in the eucledian space, but it is not path-connected, since there is no path connecting the origin to any other point on the space.\\
A well known result that relates continuity and connectedness is the following:
\begin{theorem}\label{continuity_connectedness}
	Let $ \left(X,d \right),\ \left(Y,\rho \right)   $ metric spaces, and $ f:X\to Y $ be a continuous function. If $ \left(X,d \right) $ is connected space, then the image $ f(X) $ is connected subset of $ Y $.
\end{theorem}
		Next we present our main results.
	\subsection{Main Results}
	\begin{theorem}\label{main_theorem_1} Let $ (X,d) $ be a metric space and $ U $ a path-connected subset of $ X $. We define $ M:=\{x_1,x_2,\dots x_k\}\subset int(U) $. If for every $ x\in X $ and $ \delta>0 $, the boundary of the open ball $ \partial B_{d}\left(x,\delta \right)  $ is a non-empty and path-connected set, then the set $ U\setminus M $ is also path-connected.
	\end{theorem}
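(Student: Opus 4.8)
The plan is to reduce the statement to the removal of a single point and then iterate. Writing $M=\{x_1,\dots,x_k\}$ and $M_j:=\{x_1,\dots,x_j\}$, I would argue by induction on $j$ that each $U\setminus M_j$ is path-connected. The inductive step removes one further point $x_{j+1}$ from the already-punctured set $U':=U\setminus M_j$, and the only thing to verify before invoking the single-point case is that $x_{j+1}$ remains an interior point of $U'$: since $x_{j+1}\in int(U)$ there is a ball $B_d(x_{j+1},\rho)\subset U$, and because $x_{j+1}\neq x_i$ for $i\le j$ one may shrink $\rho$ below $\min_{i\le j} d(x_{j+1},x_i)$ so that this ball misses $M_j$ entirely; hence $B_d(x_{j+1},\rho)\subset U'$ and $x_{j+1}\in int(U')$. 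Note that the hypothesis on boundaries of balls concerns only the ambient space $X$ and is therefore untouched by the induction.

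It then remains to treat the single-point case: if $U$ is path-connected and $p\in int(U)$, then $U\setminus\{p\}$ is path-connected. I would fix $a,b\in U\setminus\{p\}$ and, using path-connectedness of $U$, choose a continuous $\gamma:[0,1]\to U$ with $\gamma(0)=a$ and $\gamma(1)=b$. If $p\notin\gamma([0,1])$ there is nothing to do, so assume $P:=\gamma^{-1}(\{p\})$ is non-empty; it is a closed subset of $(0,1)$ because $a,b\neq p$. Pick $r>0$ with $B_d(p,r)\subset U$ and a radius $\delta$ with $0<\delta<\min\{r,d(a,p),d(b,p)\}$; this forces $a,b\notin B_d(p,\delta)$ and, since every point of $\partial B_d(p,\delta)=\overline{B_d(p,\delta)}\setminus B_d(p,\delta)$ lies at distance exactly $\delta<r$ from $p$, it also gives $\partial B_d(p,\delta)\subset B_d(p,r)\subset U$. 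By hypothesis this boundary is path-connected, and it does not contain $p$.

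The core idea would be to cut $\gamma$ the last time it leaves the ball before its first visit to $p$ and the first time it re-enters the complement after its last visit, bridging the gap along $\partial B_d(p,\delta)$. Concretely, with $t_-=\min P$ and $t_+=\max P$, I would set $\alpha=\sup\{t\le t_-:\gamma(t)\notin B_d(p,\delta)\}$ and $\beta=\inf\{t\ge t_+:\gamma(t)\notin B_d(p,\delta)\}$; both sets are non-empty (they contain $0$ and $1$ respectively) and closed, so the extrema are attained, and since $B_d(p,\delta)$ is open and contains $p=\gamma(t_\pm)$ one gets $\alpha<t_-\le t_+<\beta$. The crucial point is that $\gamma(\alpha)$ and $\gamma(\beta)$ land on the topological boundary $\partial B_d(p,\delta)$ and not merely on the metric sphere $\{x:d(x,p)=\delta\}$: by construction $\gamma(\alpha)\notin B_d(p,\delta)$ while $\gamma(t)\in B_d(p,\delta)$ for all $t\in(\alpha,t_-]$, so $\gamma(\alpha)\in\overline{B_d(p,\delta)}\setminus B_d(p,\delta)=\partial B_d(p,\delta)$, and symmetrically for $\gamma(\beta)$. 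Moreover $\gamma$ avoids $p$ on $[0,\alpha]$ and on $[\beta,1]$, since $P\subset[t_-,t_+]\subset(\alpha,\beta)$. Choosing a path $\sigma$ inside $\partial B_d(p,\delta)$ from $\gamma(\alpha)$ to $\gamma(\beta)$ and concatenating $\gamma|_{[0,\alpha]}\ast\sigma\ast\gamma|_{[\beta,1]}$ then produces a continuous path in $U$ from $a$ to $b$ that never meets $p$, completing the single-point case and, with the induction above, the theorem.

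The step I expect to be the main obstacle is precisely the passage from the metric sphere to the topological boundary. In an arbitrary metric space $\overline{B_d(p,\delta)}$ can be strictly smaller than $\{x:d(x,p)\le\delta\}$, so a point obtained from a naive intermediate-value argument applied to $t\mapsto d(\gamma(t),p)$ need only satisfy $d(\cdot,p)=\delta$ and may fail to lie in $\partial B_d(p,\delta)$, which is the only set the hypothesis controls. Framing the cut through last-exit/first-entry times relative to the \emph{open} ball $B_d(p,\delta)$ is what forces the junction points into $\partial B_d(p,\delta)$ and thereby makes the path-connectedness hypothesis applicable; the remaining ingredients (attainment of the suprema and infima, continuity of the concatenation, and the elementary choices of radii) are routine.
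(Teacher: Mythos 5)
Your proposal is correct, and its overall architecture coincides with the paper's: an induction that peels off one point at a time (your verification that $x_{j+1}\in int(U\setminus M_j)$ by shrinking $\rho$ below $\min_{i\le j}d(x_{j+1},x_i)$ is exactly what the paper needs, and merely asserts, when it claims $x_{i+1}\in int(V_i)$), followed by a single-point case in which the path is rerouted along the path-connected boundary of a small ball centered at the deleted point. Where you genuinely diverge is in how the junction points on $\partial B_d(p,\delta)$ are produced. The paper isolates this step as Proposition \ref{main_prop}: it takes the first and last visit times to $p$, and proves that each of the two sub-paths (from $x$ to $p$, and from $p$ to $y$) must meet $\partial B_d(p,\delta)$ for every small $\delta$, arguing by contradiction --- if the image missed the boundary, it would split into the two non-empty, relatively open pieces inside $B$ and inside $ext(B)$, contradicting connectedness of the continuous image of an interval. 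You instead define last-exit/first-entry times $\alpha,\beta$ relative to the \emph{open} ball and observe that an attained supremum of the closed set $\gamma^{-1}(X\setminus B)$ lies outside $B$ while being a limit of points of $B$, hence lies in $\overline{B}\setminus B=\partial B$; this is more elementary (no connectedness argument, and it exhibits canonical junction points rather than invoking a nonconstructive intersection), and it handles the sphere-versus-boundary subtlety you flag just as decisively. What the paper's formulation buys in exchange is reusability: Proposition \ref{main_prop} is stated with the uniformity ``for all $\delta\in(0,\delta_0]$'' and with the conclusion $\gamma(\tilde I)\cap\partial B_d(p,\delta)\subset U$, and it is invoked repeatedly later --- in the corollary on the cardinality of boundary intersections and throughout the proof of Theorem \ref{main_theorem_1_b} --- whereas your inline argument would have to be repeated or abstracted into a lemma to serve those purposes. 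Both proofs are complete for the theorem as stated.
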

In the case of violation of the assumption required in the above theorem, regarding the boundary of every open ball in the metric space to be path-connected, can ultimately lead to a violation of the result of the theorem. A trivial example that imposes the necessity of the assumption, is for the usual metric space  $ \left( \R,\rho_{\abs{\cdot}}\right).  $ In specific, in this space the boundary of every open ball $ B_{\rho_{\abs{\cdot}}}\left( x,r\right)=\left( x-r,x+r\right)  $ is the set $ \{x-r,x+r\}. $ Setting, now  $ U $ to be any bounded interval in $ \R $, (therefore $ U $ is path-connected) then for any  internal point $ x_0 $ of $ U $, we conclude that the set $ U\setminus\{x_0\} $ is not path-connected, since is not even connected set. Moreover, even if the condition of path-connectedness of the boundary of every open ball holds, the violation of $ M $ being a subset of the interior of $ U $, can ultimately lead to a violation of the result of the theorem. An example that illustrates the last case, is the metric space $ (\R^2,d_2) $ where in this space the boundary of every open ball is path-connected. Now, let $ y_0 $ be any fixed real number and let $ A $  be any finite subset of $ \R. $ Then, we set as $ U $ to be $ U:=\{(x,y_0):\ x\in\R\} $ and $ M:=\{(x,y_0):\ x\in A\} $. We observe that $ M\subset U $ but $ int(U)=\emptyset. $ Therefore $ M\nsubseteq U $. Easily we conclude that $ U $ is path-connected but $ U\setminus M  $ is not path-connected.

Next, we present a theorem that provides sufficent conditions for preservation of connectedness of a metric space after the extraction of a countably infinite subset. 

\begin{definition}[U-M property]
	Let $ (X,d) $ be a metric space, $ U $ a path-connected subset of $ X $ with $ M\subset U. $ We call that the set $ U $ satisfies the property $ (U-M) $ if and only if:\\
	$ \forall x,y\in U\setminus M,\ \forall\tilde{x}\in M, $ if $ \gamma:I\to U $ is a path that connects the points $ x,y $ and $ (x_j)_{j\in\N} $ a sequence of discrete points of $ M $ such that:
	\begin{gather}
		\exists z\in \{x_j:\ j\in\N\}\setminus\{\tilde{x}\},\ dist\left( \tilde{x},\{x_j:\ j\in\N\}\setminus\{\tilde{x}\}\right)  =d(\tilde{x},z)
	\end{gather}
then there exists continuous map $ \gamma^*_{z,\tilde{x}}:[t_z,t_{\tilde{x}}]\to U $ such that $ z=\gamma^*_{z,\tilde{x}}(t_z),\ \tilde{x}=\gamma^*_{z,\tilde{x}}(t_{\tilde{x}}) $ and $ \gamma^*_{z,\tilde{x}}\left((t_z,t_{\tilde{x}}) \right) \cap M=\emptyset. $ 
\end{definition}
\begin{theorem}\label{main_theorem_1_b}
	Let $ (X,d) $ be a metric space, $ U $ a path-connected subset of $ X $ with $ M\subset int(U) $ closed  countably infinite such that $ iso(M)=M $. If $ (U-M) $ property holds and for every $ x\in X $ and $ \delta>0,\ \partial B_{d}\left(x,\delta \right)  $ is a non-empty and path-connected set, then the set $ U\setminus M $ is also path-connected.

\end{theorem}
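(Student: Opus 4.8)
The plan is to deduce the countable case from the finite case of Theorem \ref{main_theorem_1} by showing that, although $M$ is infinite, every individual path in $U$ can meet $M$ in only finitely many points; the whole difficulty is thus shifted onto a compactness/discreteness argument, after which the rerouting mechanism of Theorem \ref{main_theorem_1} applies essentially verbatim.

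First I would extract from the hypotheses the structural fact that $M$ is a closed discrete subset of $X$ with no accumulation point: since $iso(M)=M$ every point of $M$ is isolated in $M$, and since $M$ is closed it contains each of its limit points; a limit point lying in $M$ would fail to be isolated, so $M$ has no limit point in $X$ at all. Next, fixing $x,y\in U\setminus M$, I would use path-connectedness of $U$ and restrict the parameter interval to the compact subinterval $J$ carrying $x$ and $y$, obtaining a continuous $\gamma\colon J\to U$ with compact image. The pivotal step is then that $\gamma(J)\cap M$ is \emph{finite}, because it is a closed subset of the compact set $\gamma(J)$, hence compact, while being a subspace of the discrete set $M$ it is discrete, and a compact discrete space is finite. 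Writing $\gamma(J)\cap M=\{x_{i_1},\dots,x_{i_m}\}$, I would finish by induction on $m$: at each step choose a radius $r>0$ so small that $\overline{B}_d(x_{i_1},r)\subset int(U)$ and $\overline{B}_d(x_{i_1},r)\cap M=\{x_{i_1}\}$ (possible by discreteness and $x_{i_1}\in int(U)$), take the first-entry and last-exit parameters $t^-,t^+$ of $\gamma$ into this closed ball, and replace the \emph{entire} arc $\gamma|_{[t^-,t^+]}$ by a single path lying on the boundary sphere from $\gamma(t^-)$ to $\gamma(t^+)$; as this sphere is non-empty and path-connected by hypothesis and, being of radius $r$, meets $M$ nowhere, the resulting path avoids $x_{i_1}$ together with all of $M$ that the old arc met, so the number of offending points strictly drops and the induction closes.

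The step I expect to fight hardest is not the rerouting itself but guaranteeing that it need only be done finitely often: the whole reduction rests on the assertion that $M$ closed with $iso(M)=M$ has no accumulation points, which I would isolate as a standalone lemma, since it is used both to produce the isolating balls and, crucially, to force $\gamma(J)\cap M$ to be finite. Replacing a single segment $[t^-,t^+]$ per step, rather than one detour per connected component of the preimage of the ball, is what keeps the number of splices finite and the glued map continuous, thereby sidestepping the danger that infinitely many short excursions accumulate. The only genuinely metric-space-theoretic subtlety, inherited from Theorem \ref{main_theorem_1}, is to certify that the entry and exit points $\gamma(t^\pm)$ actually lie on the path-connected boundary $\partial B_d(x_{i_1},r)$ and that an $M$-avoiding arc joins them there; this is exactly the situation the $(U-M)$ property is tailored to, furnishing for each offending point an interior-$M$-free connecting arc and thus supplying the detour whenever the bare sphere is awkward to navigate.
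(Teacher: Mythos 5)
Your overall architecture is genuinely different from the paper's, and its core idea is both correct and illuminating. The paper never isolates your key lemma; instead, for a fixed path $\gamma$ it constructs inductively the hitting-time sequences $(t_k),(t'_k)$ of successive points of $M$ (Claim \ref{cl_1}), splits according to whether these reach the last hitting parameter $t^*$ in finitely many steps ($k^*\in\N$) or not ($k^*=\infty$), and invokes the $(U-M)$ property precisely and only in the case $k^*=\infty$. Your compactness argument is sound: $M$ closed with $iso(M)=M$ has no accumulation points, so $S:=M\cap\gamma([t_x,t_y])$ is a compact set all of whose points are isolated in $S$, hence finite. This does more than simplify the proof: combined with the paper's own Claim \ref{cl_1}(vi) (the points $\gamma(t_k),\ k\in K,$ are pairwise distinct elements of $S$), it shows the case $k^*=\infty$ can never occur, i.e.\ the $(U-M)$ hypothesis is superfluous and your argument proves a stronger statement. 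You are also right not to apply Theorem \ref{main_theorem_1} as a black box (a path avoiding finitely many points of $M$ could hit other points of $M$), but to iterate its $k=1$ rerouting on the given path, with radii $r<\tilde{\delta}_{x_{i_1}}$ where $B_d(x_{i_1},\tilde{\delta}_{x_{i_1}})\cap M=\{x_{i_1}\}$, so each detour sphere avoids $M$ entirely and the number of offending points strictly drops.

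There is, however, one genuine flaw in the rerouting step as written. The first-entry and last-exit parameters $t^{\pm}$ into the closed ball give points with $d\left(\gamma(t^{\pm}),x_{i_1}\right)=r$, i.e.\ points of the metric sphere $\{z:\ d(z,x_{i_1})=r\}$. In an arbitrary metric space one only has the inclusion $\partial B_d(x_{i_1},r)=\bar{B}_d(x_{i_1},r)\cap Cl\left(X\setminus B_d(x_{i_1},r)\right)\subset\{z:\ d(z,x_{i_1})=r\}$, which may be strict; the hypothesis of the theorem guarantees path-connectedness only of $\partial B_d(x_{i_1},r)$, so you cannot conclude that $\gamma(t^-)$ and $\gamma(t^+)$ can be joined inside it. Your proposed patch for exactly this subtlety --- appealing to the $(U-M)$ property --- is the wrong tool: that property furnishes $M$-avoiding arcs between a point $\tilde{x}\in M$ and a nearest \emph{other point of $M$}, and says nothing about sphere entry/exit points. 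The correct repair is the paper's Proposition \ref{main_prop}: by the connectedness argument, the sub-arc of $\gamma$ from $x$ to the first occurrence of $x_{i_1}$ must meet $\partial B_d(x_{i_1},\delta)$ itself (not merely the metric sphere) for every sufficiently small $\delta$, and likewise for the sub-arc from the last occurrence of $x_{i_1}$ to $y$; splicing the sphere path at such points, which lie strictly before and strictly after all occurrences of $x_{i_1}$, your induction closes exactly as intended.
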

	
	\section*{Notations}
	Let $ (X,d) $ be a metric space and $ A\subset X. $\\
	$ B_d\left(x,\delta \right):=\{z\in X :\ d(z,x)<\delta\}  $ the usual open ball with center $ x\in X $ and radious $ \delta>0 $ \\
	$ int(A):=\{x\in X:\ \exists\delta>0,\ B_d(x,\delta)\subset A\} $, the internal of the set $ A $\\
	$ ext(A):=int(X\setminus A) $, the external of the set $ A $\\
	$ Cl(A)\equiv\bar{A}:=\{x\in X:\ x\ \text{limit point of the set A}\} $, the closure of the set $ A $\\
	$ \partial A:=\bar{A}\cap Cl\left( X\setminus A\right) $, the boundary of the set $ A $\\
	$ iso(A):=\{x\in X:\ \exists \delta_x>0,\ B_d(x,\delta_x)\cap A=\{x\}\} $, the set of isolated points of the set $ A $\\
	For a continuous function $ \gamma:I\to X $, where $ I $ is an interval in $ \R $,  we use the following symbolism:
	\begin{itemize}
		\item $ \gamma(I):=\{\gamma(t)| \ t\in I\} $ the trace of the curve $ \gamma $ on the interval $ I. $
		\item For $ x,y\in \gamma(I) $, let $ x=\gamma(t_x), y=\gamma(t_y) $ and without loss of generality, let $ t_x<t_y $, then we set: 
		$ \gamma_{xy}:=\{\gamma(t)|\ t\in[t_x,t_y]\} $, i.e the part of curve $ \gamma $ that connects the points $ x $ and $ y. $
	\end{itemize}

	
	\section{Proof of Main Results}\label{sec3}
	The proof of the above theorems is implemented through the following fundamental to our analysis proposition:
 
	\begin{prop}\label{main_prop}
		Let $ (X,d) $ be an arbitrary metric space, $ U\subset X,\ x_0\in int(U), $ $ z\in X\setminus\{x_0\}, $ such that $ \forall\delta>0,\ \partial B_d(x_0,\delta)\neq\emptyset. $ If $ \gamma:I\to X $ is a continuous function defined on the interval $ I\subset\R $ such that $ x_0=\gamma(t_{x_0}),z=\gamma(t_z), $ where $ t_{x_0},t_z\in I, $ then there exists $ \delta_0>0 $ such that
		\begin{gather}\label{eq_1}
				\emptyset\neq\gamma(\tilde{I}) \cap\partial B_d(x_0,\delta)\subset U,\ \forall\delta\in( 0,\delta_0] 
		\end{gather}
	where $ \tilde{I}:=\begin{cases}
		[t_{x_0},t_z],\ &t_{x_0}<t_z\nonumber\\
		[t_z,t_{x_0}],\ &t_z<t_{x_0}\nonumber.
	\end{cases} $
	\end{prop}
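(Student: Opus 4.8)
The plan is to reduce to the case $t_{x_0} < t_z$ (the other case being symmetric), so that $\tilde{I} = [t_{x_0}, t_z]$, and to split the claim into the two assertions it contains: the containment $\gamma(\tilde I) \cap \partial B_d(x_0,\delta) \subset U$ and the non-emptiness of this intersection. The unifying device is the continuous real-valued function $g : \tilde I \to \R$ defined by $g(t) := d(\gamma(t), x_0)$, which is continuous as a composite of $\gamma$ with the $1$-Lipschitz map $x \mapsto d(x, x_0)$, and which satisfies $g(t_{x_0}) = 0$ and $g(t_z) = d(z, x_0) > 0$ since $z \neq x_0$.

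For the containment I would first use $x_0 \in int(U)$ to fix $r > 0$ with $B_d(x_0, r) \subset U$. The key elementary observation is that in any metric space $\partial B_d(x_0,\delta) \subseteq \{x : d(x, x_0) = \delta\}$: indeed $\overline{B_d(x_0,\delta)} \subseteq \{d(\cdot, x_0) \le \delta\}$ by continuity of the metric, while $X \setminus B_d(x_0,\delta) = \{d(\cdot,x_0) \ge \delta\}$ is already closed, so the boundary, being their intersection, lies on the sphere $\{d(\cdot, x_0) = \delta\}$. Consequently, whenever $\delta < r$ every point of $\partial B_d(x_0,\delta)$ satisfies $d(\cdot, x_0) = \delta < r$ and hence belongs to $B_d(x_0, r) \subset U$; this already gives $\partial B_d(x_0,\delta) \subset U$, and a fortiori the intersection with $\gamma(\tilde I)$ lies in $U$.

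For the non-emptiness I would use a first-exit-time argument. Choose $\delta_0 := \min\{r/2, d(z, x_0)\}$, so that $0 < \delta_0 < r$ and $\delta_0 \le d(z, x_0)$, and fix any $\delta \in (0, \delta_0]$. Set $A := \{t \in \tilde I : g(t) \ge \delta\}$; this set is closed and non-empty, as it contains $t_z$ because $g(t_z) = d(z,x_0) \ge \delta$, so $s := \inf A$ is attained and gives $g(s) \ge \delta$. Since $g(t_{x_0}) = 0 < \delta$ one has $s > t_{x_0}$, and every $t \in [t_{x_0}, s)$ satisfies $g(t) < \delta$; letting $t \uparrow s$ and invoking continuity of $g$ yields $g(s) \le \delta$, hence $g(s) = \delta$, i.e. $d(\gamma(s), x_0) = \delta$.

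The delicate point, and the step I expect to be the main obstacle, is that lying on the metric sphere $\{d(\cdot, x_0) = \delta\}$ is in general strictly weaker than lying in the topological boundary $\partial B_d(x_0, \delta)$, since the reverse of the inclusion used above can fail in an arbitrary metric space. To upgrade $\gamma(s)$ to a genuine boundary point I would exhibit it simultaneously as a limit of interior points: picking $t_n \uparrow s$ with $t_n \in [t_{x_0}, s)$ gives $\gamma(t_n) \in B_d(x_0,\delta)$ and $\gamma(t_n) \to \gamma(s)$ by continuity of $\gamma$, whence $\gamma(s) \in \overline{B_d(x_0,\delta)}$. Combined with $\gamma(s) \in \{d(\cdot,x_0) \ge \delta\} = X \setminus B_d(x_0,\delta)$, this places $\gamma(s)$ in $\overline{B_d(x_0,\delta)} \cap \left(X\setminus B_d(x_0,\delta)\right) = \partial B_d(x_0,\delta)$. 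As $\gamma(s) \in \gamma(\tilde I)$ and $\delta \le \delta_0$ was arbitrary, this proves both that the intersection in \eqref{eq_1} is non-empty and, together with the previous paragraph, that it is contained in $U$, for every $\delta \in (0, \delta_0]$. I note that this construction produces the boundary point directly, so the hypothesis $\partial B_d(x_0,\delta) \ne \emptyset$ is recovered as a by-product rather than genuinely used.
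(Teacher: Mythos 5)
Your proof is correct, and it takes a genuinely different route from the paper's. The paper argues by contradiction: assuming $\gamma(\tilde I)\cap\partial B_d(x_0,\tilde\delta)=\emptyset$ for some admissible $\tilde\delta$, it splits $X$ into $B_d(x_0,\tilde\delta)\cup\partial B_d(x_0,\tilde\delta)\cup ext\left(B_d(x_0,\tilde\delta)\right)$, observes that $V_1:=\gamma(\tilde I)\cap B_d(x_0,\tilde\delta)$ and $V_2:=\gamma(\tilde I)\cap ext\left(B_d(x_0,\tilde\delta)\right)$ would then form an open partition of $\gamma(\tilde I)$ (non-emptiness of $V_2$ requiring a separate claim that $z$ lies in the exterior), contradicting connectedness of the continuous image of an interval; the containment in $U$ is then checked exactly as you do, via $\partial B_d(x_0,\delta)\subset\bar B_d(x_0,\delta)\subset B_d(x_0,\epsilon_{x_0})\subset U$. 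You instead give a direct, constructive argument: compose $\gamma$ with $d(\cdot,x_0)$ and extract the first-exit time $s=\inf\{t:\ g(t)\geq\delta\}$, producing an explicit point $\gamma(s)$ with $g(s)=\delta$, and then — this is the step you rightly flag as delicate — upgrade membership in the metric sphere to membership in the topological boundary by exhibiting $\gamma(s)$ as a limit of points of the open ball while it also lies in the closed complement. The paper's partition argument never has to confront the sphere-versus-boundary distinction, which is its main economy; your argument buys constructiveness (an ordered, first hitting point rather than a bare non-emptiness statement, which is in fact the kind of object the paper later re-extracts via $\inf$/$\sup$ definitions in the proofs of Theorems \ref{main_theorem_1} and \ref{main_theorem_1_b}) and makes explicit, rather than implicit, the observation that the hypothesis $\partial B_d(x_0,\delta)\neq\emptyset$ is never used — in both proofs it is recovered as a consequence, so it is redundant in the statement of Proposition \ref{main_prop} (though not in the main theorems, where path-connectedness of the boundaries is essential).
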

		\begin{center}
			\includegraphics[width=80mm]{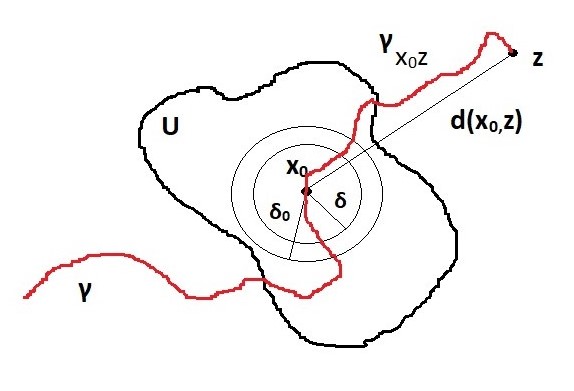}
		\end{center}
	\begin{proof}
		Without loss of generality, let $ t_{x_0}<t_z, $ i.e $ \tilde{I}:=[t_{x_0},t_z]. $ From the assumption that $ x_0\in int(U) $, there exists $ \epsilon_{x_0}>0 $ such that $ B_d\left(x_0,\epsilon_{x_0} \right)\subset U.  $ We define 
		\begin{gather}
			\delta_0:=\frac{1}{2}\min\{\epsilon_{x_0},d(z,x_0)\}.\nonumber
		\end{gather}
	We observe that $ \delta_0>0 $, since $ x_0\neq z. $ We will prove that $ \forall\delta\in(0,\delta_0],\ \gamma(\tilde{I})\cap\partial B_d\left(x_0,\delta \right)\neq\emptyset.  $\\
	Let suppose the opposite of what we seek to prove i.e 
	\begin{gather}\label{contradiction_assump}
		 \exists\tilde{\delta}\in(0,\delta_0],\ \gamma(I)\cap\partial B_d(x_0,\tilde{\delta})=\emptyset .
	\end{gather}
\textbf{Claim} $ z\in ext\left( B_d(x_0,\tilde{\delta}) \right) \equiv int\left( X\setminus B_d(x_0,\tilde{\delta})\right)  $.\\
In specific we prove that $ B_d(z,\tilde{\delta})\subset X\setminus B_d(x_0,\tilde{\delta}). $ Indeed, let $ y\in B_d(z,\tilde{\delta}),$ equivalently, $ d(y,z)<\tilde{\delta}. $ For what we are asking to prove, it is enough to show that $ y\notin B_d(x_0,\tilde{\delta}), $ equivalently $ d(y,x_0)\geq\tilde{\delta}. $ Let suppose that the last does not hold i.e $ d(y,x_0)<\tilde{\delta} $. Then,
\begin{gather}
	d(x_0,z)\leq d(x_0,y)+d(y,z)<2\tilde{\delta}\leq 2\delta_0\leq 2\frac{1}{2}d(x_0,z)=d(x_0,z)\nonumber.
\end{gather}
So we obtain $ d(x_0,z)<d(x_0,z) $ which is a contradiction.\\
We continue now with the fact that the metric space $ X $ receives a partition  in terms of the open ball $ B_d(x_0,\tilde{\delta}) $  $ \left( i.e\ int\left( B_d(x_0,\tilde{\delta})\right)=B_d(x_0,\tilde{\delta})\right)  $:
\begin{gather}
	X=B_d(x_0,\tilde{\delta}) \cup \partial B_d(x_0,\tilde{\delta})\cup ext\left( B_d(x_0,\tilde{\delta})\right) \nonumber.
\end{gather}
As a result from the above, we obtain
\begin{align}
	 \gamma(\tilde{I})&=\gamma(\tilde{I})\cap X \nonumber\\
	          &=\left(\gamma(\tilde{I})\cap B_d(x_0,\tilde{\delta}) \right) \cup \left(\gamma(\tilde{I})\cap \partial B_d(x_0,\tilde{\delta})  \right) \cup\left( \gamma(\tilde{I})\cap ext\left( B_d(x_0,\tilde{\delta}) \right)\right)\nonumber\\
	          &=^{(\ref{contradiction_assump})}\left(\gamma(\tilde{I})\cap B_d(x_0,\tilde{\delta}) \right)\cup\left( \gamma(\tilde{I})\cap ext\left( B_d(x_0,\tilde{\delta}) \right)\right)\nonumber\\
	          &=V_1\cup V_2\nonumber
\end{align}
where $ V_1:=\gamma(\tilde{I})\cap B_d(x_0,\tilde{\delta}) $ and $ V_2:=\gamma(\tilde{I})\cap ext\left( B_d(x_0,\tilde{\delta})\right) . $\\
First we see that from the Principle of Inheritance, the sets $ V_1, V_2 $ are open in the metric subspace $ \left(\gamma(\tilde{I}),d_{\gamma(\tilde{I})} \right)  $. Moreover, $ V_1\neq\emptyset, $ since $ x_0\in\gamma(\tilde{I})\cap B_d(x_0,\tilde{\delta})  $ and $ V_2\neq\emptyset, $ since from the previous claim, we have $ z\in ext\left(B_d(x_0,\tilde{\delta}) \right)  $ and $ z\in\gamma(\tilde{I}). $ Clearly, we see that $ V_1\cap V_2=\emptyset, $ since 
\begin{gather}
	B_d(x_0,\tilde{\delta})\cap ext\left( B_d(x_0,\delta)\right)= B_d(x_0,\tilde{\delta})\cap int\left(X\setminus B_d(x_0,\delta)\right)\subset B_d(x_0,\tilde{\delta})\cap \left(X\setminus B_d(x_0,\delta)\right)=\emptyset\nonumber.
\end{gather}
Consequently, the pair of sets $ \{V_1, V_2\} $ is an open (with respect to the relative metric $ d_{\gamma(\tilde{I})} $) partition of $ \gamma(\tilde{I}) $. From the last we conclude that the set $ \gamma(\tilde{I}) $ is not connected. The latter is contradicted with the fact that, since $ \tilde{I} $ is an interval in $ \R $, therefore a connected set and the function  $ \gamma:\tilde{I}\to X $, is continuous, it follows from Theorem \ref{continuity_connectedness} that $ \gamma(\tilde{I}) $ will be connected. As a result from the above analysis, we receive that:  
$  \forall\delta\in(0,\delta_0],\ \gamma(\tilde{I})\cap\partial B_d\left(x_0,\delta \right)\neq\emptyset.   $\\
 Moreover, since $ 0<\delta\leq\delta_0\leq\frac{1}{2}\epsilon_{x_0}<\epsilon_{x_0}, $ it follows that:
\begin{align}
	\gamma(\tilde{I})\cap\partial B_d\left(x_0,\delta \right)&\subset\gamma(\tilde{I})\cap\bar{B}_d\left(x_0,\delta \right)\nonumber\\
	&\subset\gamma(\tilde{I})\cap B_d\left(x_0,\epsilon_{x_0} \right)\nonumber\\
	&\subset ^{\left( B_d\left(x_0,\epsilon_{x_0} \right)\subset U\right) }\gamma(\tilde{I})\cap U\subset U\nonumber.
\end{align}
\end{proof}
 	
	It is worth noting that, in case where the metric space $ (X,d) $  has the property: $ \forall x\in X,\ \forall\delta>0,\ \partial B_d(x,\delta) $ is path-connected, then automatically (by the definition of path-connectedness) it is extracted that $ \partial B_d(x,\delta)\neq\emptyset,\ \forall x\in X,\ \forall\delta>0 $. Moreover, from the above proposition, an interesting result is derived regarding the minimum number of intersection points of the curve with the boundary of the ball.
	
	\begin{corollary}
		Let $ (X,d) $ is an arbitrary metric space, $ U\subset X, $   $ x_0\in int(U)$, $ x,y\in X\setminus\{x_0\} $ with $ x\neq y $ and $ \forall\delta>0,\ \partial B_d(x_0,\delta)\neq\emptyset.$ If $ \gamma:I\to X $ is a continuous function defined on the interval $ I\subset\R $, such that $ x,y,x_0\in \gamma(I) $ and $ \gamma_{xx_0}\cap \gamma_{x_0y}=\{x_0\}$, then there exists $ \tilde{\delta_0}>0 $ such that 
		\begin{gather}
			card\left( \gamma(I)\cap\partial B_d\left( x_0,\delta\right) \right)\geq 2,\ \forall\delta\in( 0,\tilde{\delta_0}] .
		\end{gather}
	\end{corollary}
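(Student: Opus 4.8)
The plan is to apply Proposition \ref{main_prop} twice, once to each of the two sub-arcs $\gamma_{xx_0}$ and $\gamma_{x_0y}$, and then to argue that the two intersection points produced on these sub-arcs must be distinct.

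First I would invoke Proposition \ref{main_prop} with $z=x$. Since $x_0\in int(U)$, $x\in X\setminus\{x_0\}$, and $\partial B_d(x_0,\delta)\neq\emptyset$ for every $\delta>0$, the proposition yields a radius $\delta_1>0$ (explicitly $\delta_1=\frac{1}{2}\min\{\epsilon_{x_0},d(x,x_0)\}$) such that $\gamma_{xx_0}\cap\partial B_d(x_0,\delta)\neq\emptyset$ for all $\delta\in(0,\delta_1]$; here I use that the interval $\tilde{I}$ between $t_{x_0}$ and $t_x$ satisfies $\gamma(\tilde{I})=\gamma_{xx_0}$ by the definition of the notation. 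Applying the same proposition with $z=y$ gives a radius $\delta_2>0$ such that $\gamma_{x_0y}\cap\partial B_d(x_0,\delta)\neq\emptyset$ for all $\delta\in(0,\delta_2]$. I would then set $\tilde{\delta_0}:=\min\{\delta_1,\delta_2\}$, so that for every $\delta\in(0,\tilde{\delta_0}]$ one can select a point $p\in\gamma_{xx_0}\cap\partial B_d(x_0,\delta)$ and a point $q\in\gamma_{x_0y}\cap\partial B_d(x_0,\delta)$, both of which lie in $\gamma(I)\cap\partial B_d(x_0,\delta)$.

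The crux is to show $p\neq q$. The key observation is that $x_0\notin\partial B_d(x_0,\delta)$: indeed, since $\delta>0$ we have $x_0\in B_d(x_0,\delta)$, and as $B_d(x_0,\delta)$ is open it coincides with its interior, which is disjoint from its boundary. Consequently $p\neq x_0$ and $q\neq x_0$. If we had $p=q$, then this common point would belong to $\gamma_{xx_0}\cap\gamma_{x_0y}$, which by hypothesis equals $\{x_0\}$, forcing $p=q=x_0$ and contradicting $p\neq x_0$. Hence $p\neq q$.

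Therefore $\{p,q\}$ is a two-element subset of $\gamma(I)\cap\partial B_d(x_0,\delta)$, which gives $card\left(\gamma(I)\cap\partial B_d(x_0,\delta)\right)\geq 2$ for every $\delta\in(0,\tilde{\delta_0}]$. The only genuinely delicate point—and the main place where the hypothesis $\gamma_{xx_0}\cap\gamma_{x_0y}=\{x_0\}$ enters—is the separation argument of the previous paragraph guaranteeing $p\neq q$; everything else is a direct double application of Proposition \ref{main_prop}.
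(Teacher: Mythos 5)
Your proposal is correct and follows essentially the same route as the paper: two applications of Proposition \ref{main_prop} (to the sub-arcs $\gamma_{xx_0}$ and $\gamma_{x_0y}$), followed by the observation that a common intersection point would lie in $\gamma_{xx_0}\cap\gamma_{x_0y}=\{x_0\}$ while $x_0\notin\partial B_d(x_0,\delta)$. The only cosmetic difference is that the paper first establishes the parameter ordering $t_x<t_{x_0}<t_y$ before invoking the proposition, a step your argument renders unnecessary since the proposition and the separation argument work for either ordering.
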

	\begin{proof}
		Let $ x=\gamma(t_x),\ y=\gamma(t_y),\ x_0=\gamma(t_{x_0}). $ Since $ x\neq y $, it follows that $ t_x\neq t_y. $ Without loss of generality, let us suppose that $ t_x<t_y $. \\
		\textbf{Claim} $ t_x<t_{x_0}<t_y $\\
		Since $ x,y\in X\setminus\{x_0\} $, we obtain that $ t_x\neq t_{x_0} $ and $ t_y\neq t_{x_0}. $ Let us suppose that  $ t_x<t_y<t_{x_0} $. We see that
		\begin{gather}
			\gamma_{yx_0}:=\{\gamma(t)|\ t\in [t_y,t_{x_0}]\}\subset \{\gamma(t)|\ t\in [t_{x_0},t_x]\}:=\gamma_{x_0x}
		\end{gather}
	and from the above inclusion we also receive that $ \{x_0,y\}\subsetneq\gamma_{yx_0}\cap\gamma_{x_0x}, $ which contradicts with the assumption that $\gamma_{yx_0}\cap\gamma_{x_0x}=\{x_0\}.  $ We arrive in a similar way at a contradiction for the case $ t_{x_0}<t_x<t_y $. Consequently the above claim is true.\\
	Now, since $ x_0\in int(U) $, it exists $ \epsilon_{x_0}>0, $ such that $ B_d(x_0,\epsilon_{x_0}) \subset U.$ Next we define
	\begin{gather}
		\tilde{\delta}_0:=\frac{1}{2}\min\{\epsilon_{x_0},d(x,x_0),d(y,x_0)\}\nonumber.
	\end{gather}
It is clear that $ \tilde{\delta}_0>0 $ and 
\begin{gather}\label{eq_2}
	\tilde{\delta}_0\leq\delta^{(1)}_0:=\frac{1}{2}\min\{\epsilon_{x_0},d(x,x_0)\}\ \text{and}\
	\tilde{\delta}_0\leq\delta^{(1)}_0:=\frac{1}{2}\min\{\epsilon_{x_0},d(x,x_0)\}\nonumber.
\end{gather}
Then from the above relations $ (\ref{eq_2}), $ combined with the Proposition \ref{main_prop} we conclude that $ \forall\delta\in(0,\tilde{\delta}_0], $
\begin{gather}
	K_1^{(\delta)}:=\gamma_{xx_0}\cap\partial B_d(x_0,\delta)\neq\emptyset\ \text{and}\ 
	K_2^{(\delta)}:=\gamma_{x_0y}\cap\partial B_d(x_0,\delta)\neq\emptyset\nonumber
\end{gather}
	where $ \gamma_{xx_0}:=\{\gamma(t)|\ t\in [t_x,t_{x_0}]\} $ and $ \gamma_{x_0y}:=\{\gamma(t)|\ t\in [t_{x_0},t_y]\} $. Let $ z_1^{(\delta)}\in K_1^{(\delta)},z_2^{(\delta)}\in K_2^{(\delta)} $. It holds that $ K_1^{(\delta)}\cap K_2^{(\delta)}:=\emptyset. $ Indeed, if we supose that there exists $ z_{\delta}\in K_1^{(\delta)}\cap K_2^{(\delta)}, $ then $ z_{\delta}\in \gamma_{xx_0}\cap \gamma_{x_0y}.  $ By the assumption in the corollary, we conclude that $ z_{\delta}=x_0. $ Therefore, since $ z_{\delta}\in \partial B_d(x_0,\delta) $ we receive that $ x_0\in \partial B_d(x_0,\delta), $ which is a contradiction in an arbitrary metric space.\footnote{Let $ (X,d) $ be a metric space, $ x_0\in X $ and $ \delta>0. $ Then $  B_d(x_0,\delta)\cap\partial B_d(x_0,\delta)=\emptyset$. Let us suppose the opposite of what we seek to prove. I.e let $ z\in B_d(x_0,\delta)\cap\partial B_d(x_0,\delta)  $. We define $ \epsilon_z:=\delta-d(z,x_0) $. Notice that $ \epsilon_z>0 $ since $ z\in B_d(x_0,\delta).  $ Then, we see that $ B_d(z,\epsilon_z)\cap\left(X\setminus B_d(x_0,\delta) \right)=\emptyset,  $ since $ B_d(z,\epsilon_z)\subset B_d(x_0,\delta). $ This violates the definition of $ z $ being a limit point of $ X\setminus B_d(x_0,\delta). $ Therefore, $ z\notin Cl\left( X\setminus B_d(x_0,\delta) \right).   $ Consequently $ z\notin\partial B_d(x_0,\delta), $ which contradicts the initial assumption for $ z. $} Therefore, $ K_1^{(\delta)}\cap K_2^{(\delta)}=\emptyset $ resulting that $ z_1^{(\delta)}\neq z_2^{\delta}. $ In conclusion, $ z_1^{(\delta)},z_2^{(\delta)}\in\gamma(I)\cap\partial B_d(x_0,\delta), $ therefore $ card\left( \gamma(I)\cap\partial B_d\left( x_0,\delta\right) \right)\geq 2,\ \forall\delta\in( 0,\tilde{\delta}_0]. $
\end{proof}

	\subsection*{\textbf{Proof of Theorem \ref{main_theorem_1}}}
	\begin{proof}
			We begin the proof of the main result for the case $ k=1. $
		\newline
		\textbf{Case}: $ (k=1 )$\\
		In this case we prove that the set $ U\setminus\{x_0\} $ is path-connected. Let $ x, y\in U\setminus\{x_0\} $ two randomly selected points such that $ x\neq y. $ From the assumption of the theorem, $ U $ is path-connected, therefore there exists a continuous function $ \gamma:I\to U $, where $ I $ is an interval, such that $ x=\gamma(t_x) $ and $ y=\gamma(t_y), $ with $ t_x,t_y\in I. $  Since $ x\neq y $ it follows that $ t_x\neq t_y. $ Without loss of generality, we consider $ t_x<t_y. $ We distinguish the following cases:
		\begin{itemize}
			\item[(i)] $ x_0\notin\gamma([t_x,t_y]) $\\
			Therefore, $ \gamma([t_x,t_y])\subset U\setminus\{x_0\}$.
			\item[(ii)] $ x_0\in\gamma([t_x,t_y]) $\\
			Let $ t_{x_0}\in(t_x,t_y) $ such that $ x_0=\gamma(t_{x_0}). $ We define:
			\begin{gather}
				J:=\{t\in[t_x,t_y]|\ \gamma(t)=x_0\},\ \tilde{t}:=\inf J,\ t^*:=\sup J\nonumber.
			\end{gather}
			We observe that $ J\neq\emptyset, $ since $ t_{x_0}\in J. $ Moreover, $ J $ is lower and upper bounded by $ t_x $ and $ t_y $ respectively. Therefore, $ \tilde{t}, t^* $ are well defined. In specific, $ t_x\leq\tilde{t}\leq t^*\leq t_y. $ From the definition of  $ \tilde{t}, t^* $ we receive that both of them are limit points of $ J. $ Then by the sequential description of limit points, there are sequences $ (l_n)_{n\in\N},(c_n)_{n\in\N}\subset J $ such that $ l_n\xrightarrow{n\to\infty}\tilde{t} $ and $ c_n\xrightarrow{n\to\infty}t^*. $ Using now the continuity of $ \gamma, $ we receive that $ \gamma(l_n)\xrightarrow{d}\gamma(\tilde{t}) $ and $ \gamma(c_n)\xrightarrow{d}\gamma(t^*). $ Since $ \gamma(l_n)=\gamma(c_n)=x_0,\ \forall n\in\N $ by the uniqueness of limits for sequences, we finally obtain that $ \gamma(\tilde{t})=x_0=\gamma(t^*). $\\
			Next we claim that $ \tilde{t}>t_x, $ since if $ \tilde{t}=t_x, $ then $ \gamma(\tilde{t})=\gamma(t_x), $ i.e $ x_0=x, $ which contradicts with the assumption of the theorem. Similarly, we prove that $ t^*<t_y. $ Summarizing, we have that $ 	t_x<\tilde{t}\leq t^*<t_y $.\\
			From the assumption $ x_0\in int(U), $ there exists $ \epsilon_{x_0}>0 $ such that $ B_d(x_0,\epsilon_{x_0})\subset U. $ Next we define 
			\begin{gather}
				\delta_0=\frac{1}{2}\min\{\epsilon_{x_0},d(x_0,x), d(x_0,y)\}.
			\end{gather}
			Clearly we see that 
			\begin{gather}
				\delta_0\leq\delta_0^{(1)}:=\frac{1}{2}\min\{\epsilon_{x_0},d(x_0,x)\}\ \text{and}\ 
				\delta_0\leq\delta_0^{(2)}:=\frac{1}{2}\min\{\epsilon_{x_0},d(x_0,y)\}\nonumber.
			\end{gather}
			Using now the Proposition \ref{main_prop} we obtain that $ \forall\delta\in(0,\delta_0], $
			\begin{gather}
				\emptyset\neq K_1\equiv\gamma([t_x,\tilde{t}])\cap\partial B_d(x_0,\delta)\subset U\ \text{and}\
				\emptyset\neq K_2\equiv\gamma([t^*,t_y])\cap\partial B_d(x_0,\delta)\subset U\nonumber.
			\end{gather}
			Let $ z_1\in K_1 $ and $ z_2\in K_2 $. As a result from the last, 
			\begin{gather}
				\exists t_{z_1}\in [t_x,\tilde{t}\ ],\ \gamma(t_{z_1})=z_1\in\partial B_d(x_0,\delta)\ \text{and}\ 
				\exists t_{z_2}\in [t^*,t_y],\ \gamma(t_{z_2})=z_2\in\partial B_d(x_0,\delta)\nonumber.
			\end{gather}
			Initially, we claim that $ t_{z_1}<\tilde{t}\leq t^*<t_{z_2}. $ Indeed, let $ t_{z_1}=\tilde{t}. $ Then, $ \gamma(t_{z_1})=\gamma(\tilde{t}), $ i.e $ z_1=x_0 $ resulting that $ x_0\in\partial B_d(x_0,\delta), $ which is a contradiction. Consequently, $ t_{z_1}<\tilde{t}. $ Similarly, we prove that  $  t^*<t_{z_2}. $\\
			Next, using the assumption of theorem, the boundary of ball, $ \partial B_d(x_0,\delta) $ is path-connected set. Therefore, there exists a continuous function $ \tilde{\gamma}:[t_{z_1},t_{z_2}]\to\partial B_d(x_0,\delta) $ such that $ \tilde{\gamma}(t_{z_1}) =z_1$ and $\tilde{\gamma}(t_{z_2}) =z_2.  $ We notice that, 
			\begin{gather}
				\tilde{\gamma}([t_{z_1},t_{z_2}])\subset\partial B_d(x_0,\delta)\subset U\setminus\{x_0\}\nonumber.
			\end{gather}
			We define the following function $ \gamma^*:[t_x,t_y]\to U $ with
			\begin{equation}
				\gamma^*(t):=
				\begin{cases}
					\gamma(t) &  t \in [t_x,t_{z_1}]\\
					\tilde{\gamma}(t)& t\in[t_{z_1},t_{z_2}]\\
					\gamma(t) & t\in[t_{z_2},t_y].
				\end{cases}
			\end{equation}
			Spefically, the function $ \gamma^* $ is continuous and $ \gamma^*([t_x,t_y])\subset U\setminus\{x_0\}. $ Indeed, in order to prove the last, it is enough to valid the following:
			\begin{gather}\label{eq_3}
				\gamma^*\left([t_x,t_{z_1}]\cup[t_{z_2},t_y] \right) \subset U\setminus\{x_0\}.
			\end{gather}
			Let us suppose that there exists $ \bar{t}\in[t_x,t_{z_1}] $ such that $ \gamma^*(\bar{t})=\gamma(\bar{t}) =x_0.$ Then it follows that $ \bar{t}\in J. $ Therefore, $ \tilde{t}:=\inf J\leq \bar{t} $ which is a contradition, since $ \bar{t}\leq t_{z_1}<\tilde{t}. $ In a similar way, if we suppose the existence of an element $ \bar{t}\in[t_{z_2},t_y] $ such that $ \gamma^*(\bar{t})=\gamma(\bar{t}) =x_0,$ it follows that $ \bar{t}\in J. $ From the last we derive that $ t^*:=\sup J\geq\bar{t} $ which is a contradiction since $ \bar{t}\geq t_{z_2}>t^* $. Consequently, the inclusion $ (\ref{eq_3}) $ is true.
		\end{itemize}
		From the above two cases, we proved that for any two points $ x,y\in U\setminus\{x_0\} $ with $ x\neq y $ there exists a continuous function $ \gamma:I\subset\R\to U\setminus\{x_0\} $, where $ I $ is an interval, such that $ x,y\in\gamma(I). $ In consequence, the set $ U\setminus\{x_0\} $ is path-connected.\\
		We proceed now to the general case where $ k>1. $
		\newline
		\textbf{Case}: $ (k>1) $\\
		First, we define inductively the following finite sequence of sets $ (V_i)_{i\in\{1,2,\dots,k\}} $:
		\begin{gather}
			V_1:=U\setminus\{x_1\},\ \ V_i:= V_{i-1}\setminus\{x_i\},\ i=2,\dots,k.
		\end{gather}
		From the previous case $ k=1 $, since $ U $ is path-connected set and $ x_1\in int(U) $, we receive that $ V_1 $ is path-connected set. Let us suppose that $ V_i $ is path-connected for some random index $ i\in\{1,k-1\} $. Then, we notice that from the definition of $ V_{i+1} $,  the following holds: $ V_{i+1}:=V_{i}\setminus\{x_{i+1}\} $, where $ V_{i} $ is path-connected set (from the inductive step) and $ x_{i+1}\in int(V_i) $. Using now the previous case, we conclude that $ V_{i+1} $ is path-connected set. From the last finite inductive scheem, we receive that $ \forall i\in\{1,2,\dots,k\},\ V_i $ is path-connected set. Specifically, the set $ V_k=U\setminus\{x_1,x_2,\dots x_k\} $ is path-connected.
	\end{proof}

	
	Next we proceed to the proof of the Theorem \ref{main_theorem_1_b}.
	\subsection*{\textbf{Proof of Theorem \ref{main_theorem_1_b}}}
	\begin{proof}
		Let $ M $ be a closed, countable infinite subset of $ int(U) $ with all of its points to be isolated i.e $ iso(M)=M. $ We proceed to the proof that $ U\setminus M $ is path-connected. Let $ x,y\in U\setminus M $ two randomly selected points such that $ x\neq y. $ From the assumption of the theorem, $ U $ is path-connected, therefore there exists a continuous function $ \gamma:I\to U, $ where $ I $ is an interval such that $ x=\gamma(t_x) $ and $ y=\gamma(t_y) $ for some $ t_x,t_y\in I. $ Since $ x\neq y $ it follows that $ t_x\neq t_y. $ Without loss of generality, we consider $ t_x<t_y. $ Initially we set $S:=M\cap\gamma([t_x,t_y]).  $ We distinguish the following cases:\\
		\underline{\textbf{Case}}: ( $ S=\emptyset $)\\
			Therefore, we obtain $  \gamma([t_x,t_y])\subset U\setminus M  $.\\
		\underline{\textbf{Case}}: ($ S\neq\emptyset $ )\\
			Initially, we define
			 \begin{gather}
			 	t^*:=sup\{t\in[t_x,t_y]:\ \gamma(t)\in M\}.
			 \end{gather}
		 \begin{claim}\label{cl_0}
		 	The real number $ t^* $ is well defined and also satisfies the following:
		 	\begin{itemize}
		 		\item[(i)] $ \gamma(t^*)\in S\setminus\{x,y\} $, equivalently $ \gamma(t^*)\in M\cap\gamma\left(\left(t_x,t_y \right) \right)   $
		 		\item[(ii)] $ t_x<t^*<t_y $
		 		\item[(iii)] $ \gamma \left( (t^*,t_y)\right) \cap M=\emptyset $.
		 	\end{itemize}
		 \end{claim}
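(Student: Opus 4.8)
The plan is to exploit that $M$ is closed together with the continuity of $\gamma$ to guarantee that the supremum defining $t^*$ is actually attained, and then to read off (i)--(iii) from elementary properties of the supremum and from the fact that the endpoints $x,y$ lie outside $M$. First I would record that the set $A:=\{t\in[t_x,t_y]:\ \gamma(t)\in M\}=\gamma^{-1}(M)\cap[t_x,t_y]$ is non-empty, since $S\neq\emptyset$ forces the existence of some $t\in[t_x,t_y]$ with $\gamma(t)\in M$, and that it is bounded above by $t_y$. Hence $t^*=\sup A$ exists as a real number with $t_x\le t^*\le t_y$, which settles that $t^*$ is well defined.

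The crucial step is to prove $\gamma(t^*)\in M$, i.e. that $t^*\in A$. Here I would use the hypothesis that $M$ is closed: by the sequential characterization of the supremum, choose a sequence $(s_n)_{n\in\N}\subset A$ with $s_n\to t^*$; continuity of $\gamma$ gives $\gamma(s_n)\xrightarrow{d}\gamma(t^*)$, and since $\gamma(s_n)\in M$ for every $n$ and $M$ is closed, the limit $\gamma(t^*)$ belongs to $M$. (Equivalently, $A=\gamma^{-1}(M)\cap[t_x,t_y]$ is a closed, bounded, hence compact non-empty subset of $\R$, so it attains its supremum.) This is the one place where closedness of $M$ is genuinely needed.

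With $\gamma(t^*)\in M$ in hand, the three assertions follow quickly. For (i) and (ii): since $x=\gamma(t_x)$ and $y=\gamma(t_y)$ lie in $U\setminus M$, neither equals $\gamma(t^*)\in M$, so $t^*\notin\{t_x,t_y\}$; combined with $t^*\in[t_x,t_y]$ this yields $t_x<t^*<t_y$, whence $\gamma(t^*)\in M\cap\gamma\left((t_x,t_y)\right)$. The two descriptions in (i) coincide because $x,y\notin M$ gives $S\setminus\{x,y\}=S=M\cap\gamma\left((t_x,t_y)\right)$, so $\gamma(t^*)\in S\setminus\{x,y\}$ as claimed. For (iii): every $t\in(t^*,t_y]$ satisfies $t>t^*=\sup A$ and lies in $[t_x,t_y]$, so $t\notin A$, i.e. $\gamma(t)\notin M$; restricting to the open interval gives $\gamma\left((t^*,t_y)\right)\cap M=\emptyset$.

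The only real obstacle is the attainment of the supremum, which is exactly where the closedness of $M$ is indispensable; everything after that is bookkeeping with the definition of $\sup$ and the membership $x,y\notin M$. I would note, finally, that the isolatedness hypothesis $iso(M)=M$ plays no role in this particular claim and is presumably reserved for a later stage of the argument, where one must handle the accumulation of infinitely many removed points.
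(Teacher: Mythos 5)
Your proposal is correct and follows essentially the same route as the paper: well-definedness of $t^*=\sup J$ from non-emptiness and boundedness, the sequential argument combining continuity of $\gamma$ with closedness of $M$ to get $\gamma(t^*)\in M$, the exclusion of the endpoints via $x,y\notin M$ to obtain (i) and (ii), and the supremum property for (iii). Your observations that the argument amounts to compactness of $\gamma^{-1}(M)\cap[t_x,t_y]$ and that $iso(M)=M$ is not needed here are both accurate; if anything, your derivation of $t_x\le t^*$ is cleaner than the paper's (which erroneously asserts $t_x\in J$).
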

	 \begin{proof}(of Claim \ref{cl_0})
	 	First we prove that, $ t^* $ is well defined. Indeed, from the assumption of theorem, $ S\neq\emptyset$ i.e there exists $ z\in S, $ equivalently, there exists $ z\in M, $ such that $ z=\gamma(t_z) $ for $ t_z\in[t_x,t_y].  $ From the last, we conclude that $ z\in J:=\left\lbrace t\in[t_x,t_y]:\ \gamma(t)\in M \right\rbrace  $. Therefore, $ J\neq\emptyset. $ Obviously, $ J $ is upper bounded from $ t_y$  therefore $ t^* $ is well defined. Next, we proceed to the proof of $ (i)-(iii). $
	 	\begin{itemize}
	 		\item [(i)] Since $ t^*=\sup J $ is a limit point of $ J $ there exists $ (\tilde{t}_{l})_{l\in\N}\subset J $ such that $ \tilde{t}_l\xrightarrow{l\to\infty}t^*. $ By the continuity of $ \gamma$, it follows that $ \gamma(\tilde{t}_l)\xrightarrow{d}\gamma(t^*). $  Since $ (\tilde{t}_{l})_{l\in\N}\subset J $ we obtain that, $ \gamma(\tilde{t}_l)\in M $. Therefore $ \gamma(t^*) $ is a limit point of $ M. $ By the assumption $ M $ is closed, and as a result $ \gamma(t^*) \in M  $. Moreover, since  $\forall \in\N, \tilde{t}_l\in J\subset [t_x,t_y],  $ it follows that $\left( \gamma(\tilde{t}_l)\right) \subset \gamma\left([t_x,t_y] \right) .  $ From the last, we conclude also that $ \gamma(t^*) $ is limit point of $ \gamma\left([t_x,t_y] \right) . $ But the image $ \gamma\left([t_x,t_y] \right)  $ is closed as a continuous image of the compact set $ [t_x,t_y] $. As a result we receive finally that $ \gamma(t^*)\in \gamma\left([t_x,t_y] \right).  $ Now, if $ \gamma(t^*)=x=\gamma(t_x) $ then, due to the fact that $ \gamma(t^*)\in M, $ it follows then $ x\in M,  $ where the last contradicts with the assumption that $ x\notin M. $ By a similar argument, we prove that $ \gamma(t^*)\neq\gamma(t_y)=y. $ In conclusion, we receive that $ \gamma(t^*)\in M\cap \gamma\left(\left( t_x,t_y\right)  \right).  $
	 		\item [(ii)] First we see that $ t_x\in J $ and from the definition of $ t^* $ we obtain that $ t_x\leq t^*. $ Furthemore, $ t_y $ is an upper bound of $ J $, therefore $ t^*\leq t_y. $ If now $ t_x=t^* $ then $ x=\gamma(t_x)=\gamma(t^*) $ where from $ (i) $ we obtain that $ \gamma(t^*) \in M $, i.e $ x\in M. $ The last contradicts with our initial assumption, that $ x\in U\setminus M. $ As a result $ t_x<t^*. $ With a similar argument, we prove that $ t^*<t_y. $
	 		\item [(iii)] Let us suppose the opposite of what we week to prove i.e there exists $ z\in M $ such that $ z=\gamma(t_z) $ with $ t_z\in (t^*,t_y)\subset^{(ii)}[t_x,t_y]. $  It is obvious that $ t_z\in J. $ Therefore, $ t_z\leq \sup J=t^* $ which is a contradiction, since $ t_z>t^*. $
	 	\end{itemize}
	 \end{proof}
 Next we define inductively the following sequences $ (t_k)_{k\in\N},\ (t'_{k})_{k\in\N} $ of points in the interval $ [t_x,t_y]. $
 \begin{gather}
 	\begin{cases}
 		\text{We set}\  t_0:=t_x,\ t'_0:=t_x.\ \text{For}\ k\in\N\ \text{we define:}\nonumber\\
 		t_{k}:=\begin{cases}
 			\inf\left\lbrace t\in[t'_{k-1},t^*] :\ \gamma(t)\in M\setminus\left( \bigcup_{i=0}^{k-1}\left\lbrace  \gamma(t'_{i}) \right\rbrace\right) \right\rbrace , & \text{if }\ \gamma(t'_{k-1})\neq \gamma(t^*) \nonumber\\
 			t^*, & \text{if}\  \gamma(t'_{k-1})= \gamma(t^*)
 		\end{cases}
 	\\ t'_{k}:=\sup\left\lbrace t\in[t'_{k-1},t^*]:\ \gamma(t)=\gamma(t_k) \right\rbrace\nonumber.
 	\end{cases}
 \end{gather}
 
\begin{claim}[Properties of the sequences $ (t_k)_{k\in\N},\ (t'_k)_{k\in\N} $]\label{cl_1}
		The sequences $ (t_k)_{k\in\N},\ (t'_k)_{k\in\N} $ have the following properties:
		\begin{itemize}
			\item[(i)] They are well defined.
			\item[(ii)] $ \forall k\in\N,\ \gamma(t_k)=\gamma(t'_{k})\in M $.
			\item[(iii)] $ \forall k\in\N,\ t_x<t_k\leq t'_k\leq t^* $.
			\item[(iv)] If there exists $ k_0\in\N $ such that $t_{k_0}=t^*,\ (or\ t'_{k_0}=t^*) $ then $ \forall k\geq k_0,\ t_k=t^*=t'_k,\\ (resp. \forall k\geq k_0+1,\ t_k=t^*=t'_k ) $. I.e in this case the sequences  $ (t_k)_{k\in\N},\ (t'_k)_{k\in\N} $ are finally constant.
			\item[(v)] Let $ J:=\left\lbrace k\in\N:\ \gamma(t'_k) =\gamma(t^*) \right\rbrace $. \\ We define $ k^*:=\begin{cases}
				inf J, & \text{if}\ J\neq\emptyset\nonumber\\
				+\infty, & \text{if}\ J=\emptyset\nonumber
			\end{cases} $ and \ $ K:=\begin{cases}
			\{1,2,\dots,k^*\}, & \text{if}\ k^*\in\N\nonumber\\
			\N, & \text{if}\ k^*=\infty.
		\end{cases} $\\ Then, the following hold:
				\begin{itemize}
					\item[(a)]$ t_{k-1}<t^*,\ \text{and}\  t'_{k-1}<t^*, \ \forall k\in K$
					\item[(b)] $ \gamma(t_{k-1})=\gamma(t'_{k-1}) \neq\gamma(t^*),\ \forall k\in K$
					\item[(c)] $ t'_{k-1}<t_{k}, \ \forall k\in K $
					\item[(d)] $ t_{k-1}<t_k,  \ \forall k\in K $
					\item[(e)] $ t'_{k-1}<t'_k, \ \forall k\in K $
				\end{itemize} 
			\item [(vi)] $ \gamma(t_i)\neq\gamma(t_j),\ \forall i,j\in K,\ i\neq j $
			\item[(vii)] For all $ k\in K $, it holds that: $ \gamma\left(\left(t'_{k-1},t_{k} \right)  \right)\cap M=\emptyset $.
		\end{itemize}	 
\end{claim}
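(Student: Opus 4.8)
The plan is to prove all seven properties by a single strong induction on the index, bundling well-definedness together with (ii), (iii), and the ``eventual constancy'' statement (iv), because these facts are mutually dependent: to know that $t_k$ is well defined (that the set whose infimum defines it is nonempty) I must first know that $\gamma(t^*)$ has not already been listed among $\gamma(t'_0),\dots,\gamma(t'_{k-1})$, and that in turn follows only once one knows that after the curve attains the value $\gamma(t^*)$ the construction stabilizes. The topological engine throughout is the observation that, since $M$ is closed and $\mathrm{iso}(M)=M$, every point of $M$ is relatively open in $M$, so deleting finitely many of them leaves $M\setminus\bigcup_{i=0}^{k-1}\{\gamma(t'_i)\}$ still \emph{closed} in $X$. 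Consequently its preimage under the continuous map $\gamma$, intersected with the compact interval $[t'_{k-1},t^*]$, is closed and bounded, so the infimum defining $t_k$ is \emph{attained}; likewise $\{t\in[t'_{k-1},t^*]:\gamma(t)=\gamma(t_k)\}$ is closed (a fibre of $\gamma$) and its supremum is attained. Attainment is precisely what yields (ii): $\gamma(t_k)\in M\setminus\bigcup_{i=0}^{k-1}\{\gamma(t'_i)\}\subset M$ and $\gamma(t'_k)=\gamma(t_k)$.

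For the inductive step I would first record the equivalence $t'_k=t^*\iff\gamma(t'_k)=\gamma(t^*)\iff\gamma(t_k)=\gamma(t^*)$: if $\gamma(t_k)=\gamma(t^*)$ then $t^*$ lies in the fibre set defining $t'_k$, forcing $t'_k=t^*$, and the converse uses (ii). From this, (iv) is immediate: if $\gamma(t'_{k_0})=\gamma(t^*)$ then the defining clause for $t_{k_0+1}$ is triggered, giving $t_{k_0+1}=t^*$ and hence $t'_{k_0+1}=t^*$, and one iterates. The contrapositive of (iv) is exactly what unblocks well-definedness: when $\gamma(t'_{k-1})\neq\gamma(t^*)$, no earlier $t'_i$ can equal $t^*$, so $\gamma(t^*)\notin\{\gamma(t'_0),\dots,\gamma(t'_{k-1})\}$, whence (recalling $\gamma(t^*)\in M$ by Claim \ref{cl_0}) $t^*$ itself belongs to the set whose infimum is $t_k$, and that set is nonempty. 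The bounds in (iii) then follow: $t_k\ge t'_{k-1}\ge t_x$; strictness $t_x<t_k$ holds because either $t_k=t^*>t_x$ by Claim \ref{cl_0}(ii) or $\gamma(t_k)\in M$ while $\gamma(t_x)=x\notin M$; and $t_k\le t'_k\le t^*$ because $t_k$ lies in the fibre set defining $t'_k$ and that set sits inside $[t'_{k-1},t^*]$.

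To organize (v)--(vii) I would introduce $J,k^*,K$ as stated and note that $k\in K$ forces $k-1<k^*=\inf J$, so $k-1\notin J$; by the equivalence above this gives $\gamma(t'_{k-1})\neq\gamma(t^*)$ and hence $t'_{k-1}<t^*$ (together with the base case $t'_0=t_x<t^*$ from Claim \ref{cl_0}(ii)), which is (a), while (b) is (ii) applied at $k-1$. For (c) I would use attainment once more: since $\gamma(t_k)\in M\setminus\bigcup_{i=0}^{k-1}\{\gamma(t'_i)\}$, the left endpoint $t'_{k-1}$ cannot lie in the set defining $t_k$ (its $\gamma$-value is the excluded point $\gamma(t'_{k-1})$), and that set being closed forces its infimum to exceed $t'_{k-1}$ strictly; then (d) and (e) follow from $t_{k-1}\le t'_{k-1}<t_k\le t'_k$. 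Property (vi) is the distinctness built into the construction: for $i<j$ in $K$, the membership $\gamma(t_j)\in M\setminus\bigcup_{l=0}^{j-1}\{\gamma(t'_l)\}$ excludes $\gamma(t'_i)=\gamma(t_i)$. Finally, for (vii): if some $s\in(t'_{k-1},t_k)$ had $\gamma(s)\in M$, then either $\gamma(s)$ avoids all previously listed values, making $s$ a member of the set defining $t_k$ and so $s\ge t_k$, a contradiction; or $\gamma(s)=\gamma(t'_i)=\gamma(t_i)$ for some $i\le k-1$, in which case $s$ lies in the fibre set defining $t'_i$, forcing $s\le t'_i\le t'_{k-1}<s$ (the monotonicity $t'_i\le t'_{k-1}$ coming from iterating (e)), again a contradiction.

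The step I expect to be the main obstacle is the well-definedness/attainment layer, and specifically the verification that $M\setminus\bigcup_{i=0}^{k-1}\{\gamma(t'_i)\}$ is closed: this is the only place the hypotheses ``$M$ closed'' and ``$\mathrm{iso}(M)=M$'' are genuinely used, and without it the infimum defining $t_k$ need neither be attained nor land in $M$, so that (ii), (c), and (vii) would all collapse. The secondary difficulty is purely organizational — resolving the apparent circularity between (i) and (iv) by carrying the eventual-constancy statement inside the induction hypothesis rather than proving it afterwards.
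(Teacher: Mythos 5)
Your proposal is correct, but it reaches the key facts by a genuinely different mechanism than the paper. The paper never states or uses your central lemma (that $M\setminus\bigcup_{i=0}^{k-1}\{\gamma(t'_i)\}$ remains \emph{closed} because $M$ is closed with $\mathrm{iso}(M)=M$, so the sets defining $t_k$ and $t'_k$ are compact and their extrema are \emph{attained}). Instead, the paper works pointwise each time: for (ii) it takes a sequence in $J_k$ converging to $\inf J_k$ and uses only closedness of $M$ to conclude $\gamma(t_k)\in M$ (not the stronger attainment statement $\gamma(t_k)\in M\setminus\bigcup_i\{\gamma(t'_i)\}$ that you get for free); then for (v)(c) and (vi) it must invoke $\mathrm{iso}(M)=M$ separately, via an isolation ball $B_d(x_{k_0-1},\delta_{k_0-1})$ around the offending point together with another convergent sequence from the defining set, to rule out $t'_{k_0-1}=t_{k_0}$ and repeated values. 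Your attainment lemma collapses those two ad hoc ball-and-sequence arguments into one line each ($t'_{k-1}\notin J_k$ while $t_k\in J_k$; $\gamma(t_j)$ lies in the punctured set by attainment), and it makes transparent exactly where the two hypotheses on $M$ enter; the paper's version is more elementary (no auxiliary topological lemma, only sequences) but pays for it with repetition. Your organization also differs: you bundle (i)--(iv) into one strong induction, whereas the paper proves (i) by induction with an inline forward-iteration argument that essentially re-proves (iv) inside it, then proves (iv) again separately. Your treatment of (vii) matches the paper's almost exactly (dichotomy on whether $\gamma(s)$ is a previously listed value, then the fibre-set/supremum contradiction), with one small omission: for the subcase $\gamma(s)=\gamma(t'_0)=x$ there is no fibre set defining $t'_0$, so that case must be excluded directly from $\gamma(s)\in M$ and $x\notin M$, as the paper does; similarly, in (ii) you should note the trivial case where $t_k$ is defined by the clause $t_k:=t^*$ rather than as an infimum. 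Both are one-line repairs, not gaps.
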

		 
		 \begin{proof}(of Claim \ref{cl_1})
		 	\begin{itemize}
		 		\item[(i)] We will prove by induction that the sequences $ (t_k)_{k\in\N},\ (t'_k)_{k\in\N} $ are well defined. First, for $ k=1, $ we observe from the Claim \ref{cl_0} (i), that $ \gamma(t'_0)=\gamma(t_x)=x\neq\gamma(t^*). $ Therefore, $ t_1=\inf J_1 $ where,  $ J_1=\left\lbrace t\in[t_x,t^*]:\ \gamma(t)\in M\setminus\{x\} \right\rbrace. $ Initially, we notice that $ J_1\neq\emptyset $, since $ t^*\in J_1 $ (see Claim \ref{cl_0} ) (i). Furtheremore, $ J_1 $ is lower bounded from $ t_x. $ Therefore $ t_1=\inf J_1 $ is well defined. Moreover, $ t'_1:=\sup J'_1 $ where $ J'_1:=\left\lbrace t\in[t_x,t^*]:\ \gamma(t)=\gamma(t_1)\right\rbrace.  $ It holds that $ t^* $ is an upper bound of $ J'_1 $ and also non empty set, since $ t_1\in J'_1. $ Indeed, from the fact that $ t_x $ is a lower bound of $ J_1, $ it follows that $ t_x\leq inf J_1 =t_1$ and since $ t^* $ is an upper bound of $ J_1 $ we obtain that $ t_1\leq t^* .$ In consequence, $ t_1\in[t_x,t^*]. $\\
		 		Let us suppose now that $ t_k $ and $ t'_k $ are well defined for random $ k\in\N $. We will prove  that $t_{k+1} $ and $ t'_{k+1}$  are well defined as well. We distinguish the following cases for $ \gamma(t'_{k}) $.
		 		\begin{itemize}
		 			\item Let $ \gamma(t'_{k})=\gamma(t^*) $. Then by the definition of $ t_{k+1},$ we receive that $ t_{k+1}=t^* $. Moreover, $ t'_{k+1}=\sup J'_{k+1} $ where $ J'_{k+1}:=\left\lbrace t\in[t'_k,t^*]:\ \gamma(t)=\gamma(t_{k+1})\right\rbrace=\left\lbrace t\in[t'_k,t^*]:\ \gamma(t)=\gamma(t^*)\right\rbrace  $. It is clear that $ J'_2\neq\emptyset, $ since $ t^*\in J'_{k+1} $ and also $ J'_{k+1} $ is upper bounded from again from $ t^*. $ In consequence, $ t'_{k+1} $ is well defined.
 		 			\item  Let $ \gamma(t'_{k})\neq\gamma(t^*) $. Then by the definition of $ t_{k+1},$ we receive that $ t_{k+1}=\inf J_{k+1}$, where $ J_{k+1}:=\left\lbrace t\in[t'_{k},t^*]:\ \gamma(t)\in M\setminus\left( \cup_{i=0}^{k}\left\lbrace \gamma(t'_i)\right\rbrace \right)  \right\rbrace.  $
		 			Initially, we observe that $ J_{k+1} $ is lower bounded from $ t'_k. $ Furtheromore, we claim that $ J_{k+1}\neq\emptyset. $ In specific, we claim that $ t^*\in J_{k+1}. $ First, we notice from Claim \ref{cl_0} (i), that $ \gamma(t^*)\in M $. Moreover, from the assumption, we receive that $ \gamma(t^*)\neq\gamma(t'_k) $ and $ \gamma(t'_{0}) =\gamma(t_x)=x\neq\gamma(t^*).$ (see again Claim \ref{cl_0} (i)). If now $ k\geq 2 $, let us suppose that there exists $ i_0\in\{1,2,\dots,k-1\},\ \gamma(t'_{i_0})=\gamma(t^*). $ Then, from the definition of the term $ t_{i_0+1} $, we obtain that $ t_{i_0+1}=t^* $ and $ t'_{i_0+1}=t^*, $ which results that, $ \gamma(t'_{i_0+1})=\gamma(t^*). $ Again, from the definition of the term, $ t_{i_0+2}, $ we obtain that $ t_{i_0+2}=t^* $ and $ t'_{i_0+2}=t^*. $ Continuing this procedure for  finite steps until we reach the term $ t_{k-1}, $ and conclude that $ t_{k-1}=t^*=t'_{k-1}. $ Therefore, $ \gamma(t'_{k-1})=\gamma(t^*) $ and as a consequence from the definition of $ t_k, $ we receive that $ t_k=t^*=t'_k=t^*. $ This leads to the fact that $ \gamma(t'_k)=\gamma(t^*) $ which contradicts our initial assumption. Therefore, $ \gamma(t^*)\in M\setminus\left(\cup_{i=0}^k \{\gamma(t'_i)\}\right)  $. I.e $ t^*\in J_{k+1}. $ From the last we conclude that, $ t_{k+1}=\inf J_{k+1} $ is well defined. Next, we have that, $ t'_{k+1}=\sup J'_{k+1} $ where $ J'_{k+1}=\left\lbrace t\in [t'_k,t^*]:\ \gamma(t)=\gamma(t_{k+1}) \right\rbrace  $. Obviously, $ J'_{k+1} $ is upper bounded from $ t^*. $ Moreover, $ J'_{k+1}\neq\emptyset,  $ since $ t_{k+1}\in J'_{k+1}. $ Indeed, since $ t'_k $ is a lower bound of $ J_{k+1}, $ it follows that $ t_{k+1}=\inf J_{k+1}\geq t'_k. $ Now, due to the fact, that $ t^* $ is an upper bound of $ J_{k+1}, $ it follows that $ t_{k+1}\leq t^*. $ In conclution, $ t_{k+1}\in [t'_k,t^*]. $ From the above analysis, through the Induction Principle, we receive the desired result. 
		 		\end{itemize}
		 		
		 		\item[(ii)] Let $ k_0\in\N $ randomly selected. We distinguish the following cases for $ \gamma(t'_{k_0-1})   $: Let $ \gamma(t'_{k_0-1}) =\gamma(t^*)$ then $ t_{k_0}=t^* $ and $ \gamma(t_{k_0})=\gamma(t^*) $. From the Claim \ref{cl_0} (i)  it follows $ \gamma(t^*) \in M$, consequently $ \gamma(t_{k_0})\in M $. Now let $ \gamma(t'_{k_0-1})\neq\gamma (t^*), $ then $ t_{k_0}=\inf J_{k_0} $.  From the fact that, $ t_{k_0}:=\inf J_{k_0} $ is a limit point of $ J_{k_0}, $ there exists a sequence $ (\tilde{t}_l)_{l\in\N}\subset J_{k_0} $ such that $ \tilde{t}_l\xrightarrow{l\to\infty}\inf J_{k_0}\equiv t_{k_0}. $ Consequently, from the continuity of $ \gamma $ we receive that, $ \gamma(\tilde{t}_l)\xrightarrow{l\to\infty}\gamma(t_{k_0})$. Since, $ (\tilde{t}_l)_{l\in\N}\subset J_{k_0}, $ it follows that $ \gamma(\tilde{t}_l)\in M,\ \forall l\in\N. $ Therefore, $ \gamma(t_{k_0})\in M $ since $ M $ is closed. Next, we prove that, $ \gamma(t'_{k_0})=\gamma(t_{k_0})\in M. $ Since $ t'_{k_0}=\sup J'_{k_0} $ and $ \sup J'_{k_0}  $ is a limit point of $ J'_{k_0} $ there exists a sequence $ \left(\bar{t}_l \right)_{l\in\N}\subset J'_{k_0}  $ such that $ \bar{t}_l\xrightarrow{l\to\infty}\sup J'_{k_0}\equiv t'_{k_0} $. By the contuinity of $ \gamma $, we receive $ \gamma(\bar{t}_l)\xrightarrow{l\to\infty}\gamma(t'_{k_0}) $. From the fact that $\left(\bar{t}_l \right)_{l\in\N}\subset J'_{k_0}$ we have that $ \gamma(\bar{t}_l)=\gamma(t_{k_0}),\ \forall l\in\N.  $ By the uniqueness of limit, it follows that $ \gamma(t'_{k_0}) =\gamma(t_{k_0})\in M. $ 
		 		
		 		\item[(iii)] Let $ k_0\in\N $ randomly selected. From the $ (ii) $, we conclude that $ t_x<t_{k_0} $, since if $ t_x=t_{k_0} $, then it follows that $ x=\gamma(t_x)=\gamma(t_{k_0}) $ i.e $ x\in M $ which contradicts with the assumption that $ x\in U\setminus M. $ By the definition of $ t'_{k_0} $ we obtain that, $ t'_{k_0}\leq t^*, $ since $ t^* $ is an upper bound of $ J'_{k_0}. $ Next, we prove that $ t_{k_0}\leq t'_{k_0}. $ Let $ \gamma(t'_{k_0-1})=\gamma(t^*)  $, then $ t_{k_0}=t^* $ and $ \gamma(t_{k_0}) =\gamma(t^*)$. Also from the last we obtain that $ t^*\in J'_{k_0} $. Consequently, since $ t^* $ is an upper bound of $ J'_{k_0} $, we receive $ t'_{k_0}=\sup J'_{k_0}=t^*$  and $ t_{k_0}=t'_{k_0}=t^*. $ Let now $ \gamma(t'_{k_0-1} )\neq\gamma(t^*)  $, then $ t_{k_0}=\inf J_{k_0}$. It holds that, $ t_{k_0}\geq t'_{k_0-1} $ since $ t'_{k_0-1} $ is a lower bound of $ J_{k_0}. $ Moreover, $ t^* $ is an upper bound of $ J_{k_0}, $ and that automatically gives that $ t_{k_0}\leq t^*. $ We conclude, that $ t'_{k_0-1}\leq t_{k_0}\leq t^*$ therefore $ t_{k_0}\in J'_{k_0} $. By the definition of $ t'_{k_0}\equiv\sup J'_{k_0}, $ it follows that, $ t_{k_0}\leq t'_{k_0}. $
		 		
		 		\item[(iv)] Let us suppose that, there exists $ k_0\in\N $ such that $ t_{k_0}=t^*$. The result will be proved through the Induction Principle. For $ k=k_0 $ we have, $ t_{k_0}=t^*. $ Then, $ t'_{k_0}=\sup J'_{k_0}  $ where $ J'_{k_0}=\sup\left\lbrace t\in[t'_{k_0-1},t^*]:\ \gamma(t)=\gamma(t^*)\right\rbrace  $. It holds that, $ t^*\in J'_{k_0} $ and $ t^* $ is an upper bound of $ J'_{k_0}. $ Therefore, $ t'_{k_0}=\sup J'_{k_0}=\max J'_{k_0}=t^*. $ Now, we suppose that for random $ k\geq k_0 $ that holds $ t_k=t^*=t'_k. $ We will prove that $ t_{k+1}=t^*=t'_{k+1.} $ Indeed since, $ t'_k=t^* $ it follows that $ \gamma(t'_k) =\gamma(t^*).$ Then from the definition of $ t_{k+1}, $ we obtain that $ t_{k+1}=t^*. $ Furthermore, $ t'_{k+1}=\sup J'_{k+1}=\sup\left\lbrace t\in[t'_k,t^*]:\ \gamma(t)=\gamma(t^*) \right\rbrace= t^*, $ since $ t^* $ is an upper bound of $ J'_{k+1} $ and also $ t^*\in J'_{k+1}. $ In conclution, from the Induction Principle, we get the desired result.\\
		 		 Next, let $ k_0\in\N  $ such that, $ t'_{k_0}=t^*. $ Then by the definition of $ t_{k_0+1} $ it follows that $ t_{k_0+1}=t^*. $ From the previous analysis, we receive that, $ \forall k\geq k_0+1, t_{k}=t^*=t'_k $.
		 		
		 		\item[(v)]
		 		\begin{itemize}
		 			\item[(a)]Let $ k^*\in\N. $ Initially, we observe that the desired inequality is true for $ k=1, $ since from Claim \ref{cl_0}(ii) we get $ t_x<t^*. $  Let us suppose the opposite of what we seek to prove i.e there exists $ k_0\in\{2,\dots,k^*\} $ such that $ t_{k_0-1}\geq t^* $. We already know from Claim \ref{cl_1}(iii) that $ t_{k_0-1}\leq t^* $. Therefore, $ t_{k_0-1}=t^* $ and $ \gamma(t_{k_0-1})=\gamma(t'_{k_0-1})=\gamma(t^*). $ As a result from the last, we receive that $ k_0-1\in J $ therefore $ k^*:=inf J\leq k_0-1 $, which is a contradiction, since $ k_0-1<k^*. $ With similar way we prove the desired inequality for the case where $ k^*=\infty. $ Moreover, we prove with analogous way the inequality $ t'_{k-1}<t^*,\ \forall k\in K. $
		 			
		 			\item[(b)]Let $ k^*\in\N.$  Initially, we observe that the desired inequality is true for $ k=1, $ since, $ \gamma(t_0)=\gamma(t_x)=x\notin M $ by assumption. At the same time, we know from Claim \ref{cl_0}(i) that $ \gamma(t^*)\in M$ and as a consequence $ \gamma(t_0)\neq\gamma(t^*). $ Let us suppose the opposite of what we seek to prove i.e there exists $ k_0\in\{2,\dots,k^*\} $ such that $ \gamma(t_{k_0-1})=\gamma( t^*) $. Clearly the last contradicts with the definition of $ k^*, $ since $ k_0-1<k^*. $  With similar way we prove the desired inequality for the case where $ k^*=\infty. $
		 			
		 			\item[(c)]  Let $ k^*\in\N $ and we select randomly $ k_0\in\{1,2,\dots,k^*\}.$ We will prove that, $ t'_{k_0-1}<t_{k_0}. $ For this, we distinguish the following cases for $ t_{k_0}. $
		 			\begin{itemize}
		 				\item $( t_{k_0}=t^*) $ Then from Claim \ref{cl_1} v(a) we obtain that $ t'_{k_0-1}<t^*=t_{k_0} $ since $ k_0-1<k_0\leq k^* $ therefore, $ t'_{k_0-1}< t_{k_0}.$
		 				\item $ (t_{k_0}<t^*) $ From the Claim \ref{cl_1} v(b) we obtain that $ \gamma(t'{k_0-1}) \neq \gamma(t^*).$ In consequence, $ t_{k_0}=\inf J_{k_0} $ where $ J_{k_0}:= \left\lbrace t\in[t'_{k_0-1},t^*]:\ \gamma(t)\in M\setminus \left(\cup_{i=0}^{k_0-1}\{\gamma(t'_i)\} \right)  \right\rbrace.  $ It is clear that $ t'_{k_0-1} $ is a lower bound of $ J_{k_0} $ and as a result by the definition of $ t_{k_0}=\inf J_{k_0}, $ we receive that $ t'_{k_0-1}\leq t_{k_0}. $ In specific, we will prove that $ t'_{k_0-1}<t_{k_0}. $ Let $ t'_{k_0-1}=t_{k_0} $ therefore, $ x_{k_0-1}=\gamma(t_{k_0-1})=\gamma(t'_{k_0-1})=\gamma(t_{k_0}) $. From the assumption of Theorem  \ref{main_theorem_1_b}, it holds that $ x_{k_0-1}\in M=iso(M). $ In consequence, there exists $ \delta_{k_0-1}>0 $ such that $ B_d\left(x_{k_0-1},\delta_{k_0-1} \right)\cap M\setminus\{x_{k_0-1}\}=\emptyset.  $ Due to the fact that, $ t_{k_0}=\inf J_{k_0} $ is a limit point of the set $ J_{k_0}, $ there exists a sequnece $ \left( \tilde{t}_l\right)_{l\in\N}\subset J_{k_0}  $ such that $ \tilde{t}_l\xrightarrow{l\to\infty}t_{k_0}. $ Furthermore, from the continuity of $ \gamma, $ it holds that $ \gamma(\tilde{t}_l)\xrightarrow{d}\gamma(t_{k_0}). $ Now, from the definition of $ J_{k_0}, $ since $ \left( \tilde{t}_l\right)\subset J_{k_0}  $ it follows that $ \forall l\in\N,\ \gamma(\tilde{t}_l)\in M\setminus\left(\cup_{i=0}^{k_0-1}\{\gamma(t'_i)\} \right) \subset M\setminus\{\gamma(t'_{k_0-1})\}=M\setminus\{x_{k_0-1}\}. $ From the convergence of $ \left(\gamma(\tilde{t}_l) \right)_{l\in\N},  $ there exists $ l_0\in\N,  $ such that, $ \forall l\geq l_0,\ \gamma(\tilde{t}_l)\in B_d\left(\gamma(t_{k_0}) ,\delta_{k_0-1}\right)=B_d\left( x_{k_0-1},\delta_{k_0-1}\right)  $. From the last, we conclude that, $B_d\left( x_{k_0-1},\delta_{k_0-1}\right)\cap M\setminus\{x_{k_0-1}\}\neq\emptyset  $ which contradicts with the fact that $ B_d\left(x_{k_0-1},\delta_{k_0-1} \right)\cap M\setminus\{x_{k_0-1}\}=\emptyset.  $ In a similar way we prove the desired inequality for the case where $ k^*=\infty. $
		 			\end{itemize}
		 			
		 			\item[(d)] Let $ k^*\in\N $ and we select randomly $ k_0\in\{1,2,\dots,k^*\}. $ Then from Claim \ref{cl_1} (iii), we obtain that $ t_{k_0-1}\leq t'_{k_0-1} $. But from Claim \ref{cl_1} v(c) we have also  $ t'_{k_0-1}<t_{k_0} $. Combining the last two, we receive the desired inequality. In a similar way we prove the desired inequality for the case where $ k^*=\infty. $
		 			
		 			\item [(e)] Let $ k^*\in\N $ and we select randomly $ k_0\in\{1,2,\dots,k^*\}. $ Then from Claim \ref{cl_1} v (c), we obtain that $ t'_{k_0-1}< t_{k_0} $. But from Claim \ref{cl_1} (iii) we have also  $ t_{k_0}\leq t'_{k_0} $. Combining the last two, we receive the desired inequality. In a similar way we prove the desired inequality for the case where $ k^*=\infty. $
		 		\end{itemize}
	 		  
	 		    \item[(vi)] Let $ k^*\in\N. $ We suppose the opposite of what we seek to prove i.e there are $ i_0,j_0\in K:=\{1,2,\dots,k^*\} $ such that $ x_{i_0}=\gamma(t_{i_0})=\gamma(t_{j_0}):=x_{j_0}. $ Without loss of generality, let $ i_0<j_0. $ If $ j_0=k^* $ then $ \gamma(t_{j_0})=\gamma(t_{k^*})=\gamma(t^*). $ But from the Claim \ref{cl_1}v(b) we conclude that $ \gamma(t_{i_0})\neq\gamma(t^*)=\gamma(t_{j_0}), $ since $ i_0<j_0=k^* $. The last result contracicts to our initial assumption $\gamma(t_{i_0})=\gamma(t_{j_0})  $. Now let $ j_0<k^*. $ By the assumption of theorem, we know that $ x_{i_0}\in iso(M). $ From the last, we obtain that there exists $ \delta_{i_0}>0 $ such that $ B_d\left(x_{i_0},\delta_{i_0} \right)\cap M\setminus \left\lbrace x_{i_0}\right\rbrace=\emptyset   $. Since $ j_0-1<k^*$, from the Claim \ref{cl_1}v(b) we receive $ \gamma(t_{j_0-1})\neq\gamma(t^*).  $ Therefore, $ t_{j_0}=\inf J_{j_0} $ where $ J_{j_0}:=\inf\left\lbrace t\in[t'_{j_0-1},t^*]:\ \gamma(t)\in M\setminus\left( \cup_{i=0}^{j_0-1}\{\gamma(t'_i)\}\right) \right\rbrace.   $ Since $ \inf J_{j_0} $ is a limit point of $ J_{j_0}, $ there exists a sequence  $ \left(\tilde{t}_l \right)_{l\in\N}\subset J_{j_0} $ such that $ \tilde{t}_l\xrightarrow{l\to\infty}\inf J_{j_0}\equiv t_{j_0} $. Moreover, from the continuity of $ \gamma $, we obtain that $ \gamma(\tilde{t}_l)\xrightarrow{l\to\infty}\gamma(t_{j_0}).  $ Now, by the definition of $ J_{j_0}, $ it follows that $\forall l\in\N,\ \gamma(\tilde{t}_l)\in M\setminus\left(\cup_{i=0}^{j_0-1}\{\gamma(t'_i)\} \right)\subset M\setminus\{\gamma(t'_{i_0})\}  $ where the last inclusion holds from the fact $ i_0<j_0\implies i_0\leq j_0-1. $ From the convergence of $ \left(\gamma\left( \tilde{t}_l\right)  \right)_{l\in\N} $ there exists $ l_0\in\N $ such that for all $ l\geq l_0, \gamma(\tilde{t}_l)\in B_d\left(\gamma(t_{j_0}),\delta_{i_0}\right) =B_d\left(\gamma(t_{i_0}),\delta_{i_0} \right) $. Consequently, $ \emptyset\neq B_d\left(\gamma(t_{i_0}),\delta_{i_0} \right)\cap M\setminus\{\gamma(t'_{i_0})\}=B_d\left(\gamma(t_{i_0}),\delta_{i_0} \right)\cap M\setminus\{\gamma(t_{i_0})\}\ $ where the last contradicts with fact that $ B_d\left(x_{i_0},\delta_{i_0} \right)\cap M\setminus \left\lbrace x_{i_0}\right\rbrace=\emptyset.$  In a similar way we prove the desired inequality for the case where $ k^*=\infty. $
	 		    
	 		    \item[(vii)] Let $ k^*\in\N. $ Let us suppose the opposite of what we seek to prove i.e. there exists $ k_0\in K:=\{1,2,\dots,k^*\} $ such that $ \gamma\left( \left( t'_{k_0-1},t_{k_0}\right) \right) \cap M\neq\emptyset $. Therefore, there exists $ z\in M $ such that $ z=\gamma(s) $ with $ t'_{k_0-1}<s<t_{k_0} $. From the Claim \ref{cl_1} v (b) it holds that $ \gamma(t'_{k_0-1})\neq\gamma(t^*) $ since $ k_0-1<k^*. $ As a result, $ t_{k_0}=\inf J_{k_0} $. We claim that $ s\in J_{k_0}. $ For the last it is sufficent to prove that, $ \gamma(s)\neq\gamma(t'_i),\ \forall i\in\{0,1,\dots,k_0-1\}. $ If $ i=0 $ then $ z:=\gamma(s)\neq \gamma(t'_0)=\gamma(t_x):=x $ since $ x\in U\setminus M $ and  $ z\in M. $ It remains to prove $ \gamma(s)\neq\gamma(t'_i),\ \forall i\in\{1,\dots,k_0-1\} $ with $ k_0\geq 2. $ Indeed, let us suppose that the last does not hold then there exists $ l\in\{1,2,\dots,k_0-1\} $ such that $ \gamma(s)=\gamma(t'_l). $ We have that $ t'_l:=\sup J'_{l},  $ where $ J'_l:=\left\lbrace t\in[t'_{l-1},t^*]:\ \gamma(t)=\gamma(t_l) \right\rbrace. $ Since, $ t'_{l-1}\leq^{\left(\text{Claim}\ \ref{cl_1} v(e) \right) } t'_{k_0-1}<s<t_{k_0}<^{\left( \text{Claim}\ref{cl_1} (iii)\right)}t^* $ we obtain that $ s\in J'_{l}. $ Therefore, $ t'_{l}\geq s. $ But the last contradicts with the fact that $ t'_l\leq^{\left(\text{Claim}\ \ref{cl_1} v(e) \right) } t'_{k_0-1}<s. $ Consequently, we receive that $ z=\gamma(s)\in M\setminus\left(\cup_{i=0}^{k_0-1}\left\lbrace \gamma(t'_i) \right\rbrace  \right)  $ and since $ s\in\left( t'_{k_0-1},t_{k_0}\right),  $ it follows that $ s\in J_{k_0} $. Therefore, $ t_{k_0}=\inf J_{k_0}\leq s $. The last leads to a contradiction since $ s<t_{k_0}. $ In a  similar way we prove the desired inequality for the case where $ k^*=\infty. $
 		  		 	
 	  		 	\end{itemize}		
		 \end{proof}
			 
    We define, $ x_k:=\gamma(t_k),\ k\in K $ and $ x^*:=\gamma(t^*) $. Let $ k\in K $. From the assumption, $ x_k\in int(U) $ and $ x_k\in iso(M) $. Therefore, there exist $ \epsilon_{x_k}>0,\tilde{\delta}_{x_k}>0 $ such that $ B_d\left( x_k,\epsilon_{x_k}\right)\subset U  $ and $  B_d\left( x_k,\tilde{\delta}_{x_k}\right) \cap M=\{x_k\} $.	
	For the construction of the path $ \tilde{\gamma}:[t_x,t_y]\to U\setminus M, $ connecting the points $ x, y, $ we distinguish the following cases for $ k^*. $\\
	\underline{\textbf{Case}}: $ k^*\in\N $\\
		 Equivalently, we have  $ K:=\{1,2,\dots,k^*\} $. In this case, we have $ x_{k^*}:=\gamma(t_{k^*})=\gamma(t^*)=x^* $.\\
		 We begin the construction of parts of the path $ \hat{\gamma}:[t_x,t_y]\to U\setminus M. $
		 
	\begin{itemize}
		\item[(i)] $ (k^*=1) $. Then $ K=\{1\} $ and $ \gamma(t_1)=\gamma(t'_1)=\gamma(t^*)\equiv x^* $. \\
		We define:
		\begin{gather}
				\tilde{t}^*:=\inf\left\lbrace t\in[t_x,t_1]:\ \gamma(t)=\gamma(t_1)\right\rbrace\equiv\inf\tilde{S}_{k^*}\nonumber.
		\end{gather}
		We notice that, $ \tilde{S}_{k^*}\neq\emptyset, $ since $ t_1\in \tilde{S}_{k^*}. $ Moreover, $ \tilde{S}_{k^*} $ is lower bounded by $ t_x $ thus $ \tilde{t}^* $ is well defined. Furthermore, by the continuity of $ \gamma, $ we obtain that $ \gamma(\tilde{t}^*)=\gamma(t_1). $ We claim also that $ t_1=\tilde{t}^* $. Indeed, from the Claim \ref{cl_1} (iii) we obtain $ t_x<t_1 $. In addition, from Claim \ref{cl_1} (vii), we obtain that $ \gamma\left( \left(t'_0,t_1 \right) \right)\cap M=\gamma\left( \left(t_x,t_1 \right) \right)\cap M=\emptyset  $. In consequence, $ \forall z\in (t_x,t_1),\ \gamma(z)\notin M $ and as a result $ \gamma(z)\neq x^*\in M. $ From the last, we receive that, $ \tilde{S}_{k^*}=\{t_1\} $ resulting that, $ t_1=\tilde{t}^*. $      
	Next, we define:
		\begin{gather}
			\delta_{x_{k^*}}:=\frac{1}{2}\min\left\lbrace\epsilon_{x_{k^*}},\tilde{\delta}_{x_{k^*}},d(x_{k^*},x),d(x_{k^*},y)\right\rbrace\nonumber.
		\end{gather} 
	 Clearly, we see that,
	 	\begin{gather}
				\delta_{x_{k^*}}\leq \delta'_{x_{k^*}}:=\frac{1}{2}\min\left\lbrace\epsilon_{x_{k^*}},d(x_{k^*},x) \right\rbrace\ \text{and}\
			\delta_{x_{k^*}}\leq\delta''_{x_{k^*}}:=\frac{1}{2}\min\left\lbrace \epsilon_{x_{k^*}},d(x_{k^*},y) \right\rbrace\nonumber
		\end{gather}
	where $ x_{k^*}\neq x $ and $  x_{k^*}\neq y, $ since $ x_{k^*}\in M $ and $ M\cap \{x,y\}=\emptyset. $ Furthermore, $ t_x<^{\left( \text{Claim}\ \ref{cl_1}(iii)\right) }t_1=\tilde{t}^* $ and $ t^*<t_y $
	Then, from the Proposition \ref{main_prop}, $ \forall\delta\in(0,\delta_{x_{k^*}}] $
	\begin{gather}
		\emptyset\neq K'_{x_{k^*}}:=\gamma\left([t_x,\tilde{t}^*]\right)  \cap\partial B_d\left( x_{k^*},\delta\right) \subset U\setminus M\ \text{and}\
		\emptyset \neq K''_{x_{k^*}}:=\gamma\left([ t^*,t_y]\right)  \cap\partial B_d\left( x_{k^*},\delta\right) \subset U\setminus M\nonumber.
	\end{gather}
      We fix such $ \delta_{k^*}\in (0,\delta_{x_{k^*}}) $. Therefore, 
      \begin{gather}
      	\exists z'_{k^*}\in[t_x,\tilde{t}^*]:\ \gamma(z'_{k^*})\in\partial B_d\left(x_{k^*},\delta_{k^*} \right),\ 
      	\exists z''_{k^*}\in[t^*,t_y]:\ \gamma(z''_{k^*})\in\partial B_d\left(x_{k^*},\delta_{k^*} \right)\nonumber.
      \end{gather}
      We define,
      \begin{gather}
      	t_{x'_{k^*}}:=\inf\left\lbrace t\in[t_x,\tilde{t}^*]:\ \gamma(t)\in\partial B_d(x_{k^*},\delta_{k^*}) \right\rbrace\equiv\inf S_{k^*},\nonumber\\
      	t_{x''_{k^*}}:=\sup\left\lbrace t\in[t^*,t_y]:\ \gamma(t)\in\partial  B_d(x_{k^*},\delta_{k^*})\right\rbrace\equiv\sup S'_{k^*}\nonumber\\
      	x'_{k^*}:=\gamma(t_{x'_{k^*}}),\ \text{and}\ x''_{k^*}:=\gamma(t_{x''_{k^*}})\nonumber. 
      \end{gather}
       Next, we observe that $ S_{k^*}\neq\emptyset, $ since $ z'_{k^*}\in S_{k^*} $ and $ S'_{k^*}\neq\emptyset, $ since $ z''_{k^*}\in S'_{k^*}. $ Moreover, $ S_{k^*} $ is lower bounded from $ t_x $ and $ S'_{k^*} $ is upper bounded by $ t_y $. As a result from the last, $ t_{x'_{k^*}}, t_{x''_{k^*}} $ are well defined. We mention that, by their definition of $ t_{x'_{k^*}}, t_{x''_{k^*}} $, we receive also, $ t_x\leq t_{x'_{k^*}} $ and $ t_{x''_{k^*}}\leq t_y $.  Furthermore, using the continuity of $ \gamma $ and the fact that $ \partial B_d(x_{k^*},\delta_{k^*}) $ is closed set, we conclude that $x'_{k^*}\equiv\gamma(t_{x'_{k^*}})\in \partial B_d(x_{k^*},\delta_{k^*} )\subset U\setminus M\ \text{and}\ x''_{k^*}\equiv\gamma(t_{x''_{k^*}})\in \partial B_d(x_{k^*},\delta_{k^*})\subset U\setminus M. ${\footnote{\text{We know that} $ \delta_{k^*}<\epsilon_{x_{k^*}}$ \text{and} $ \delta_{k^*}<\tilde{\delta}_{x_{k^*}} $. \text{Therefore}, $  \partial B_d(x_{k^*},\delta_{x_{k^*}})\subset\bar{B}_d(x_{k^*},\delta_{k^*})\subset B_d(x_{k^*},\epsilon_{x_{k^*}})\subset U $ and $ \partial B_d(x_{k^*},\delta_{k^*} )\subset\bar{B}_d(x_{k^*},\delta_{k^*})\subset B_d(x_{k^*},\tilde{\delta}_{x_{k^*}})$. Therefore, $ \partial B_d(x_{k^*},\delta_{k^*} )\cap M\subset B_d(x_{k^*},\tilde{\delta}_{x_{k^*}})\cap M=\{x_{k^*}\} $ but $ \partial B_d(x_{k^*},\delta_{k^*} )\cap\{x_{k^*}\}=\emptyset $. In consequence, $ \partial B_d(x_{k^*},\delta_{k^*} )\cap M=\emptyset $. We conclude then, $ \partial B_d(x_{k^*},\delta_{k^*} )\subset U\setminus M.  $}} From the last, it is extracted that $ t_x<t_{x'_{k^*}},\ t_{x''_{k^*}}<t_y $. Indeed, since $ x=\gamma(t_x)\in B_d(x,\frac{1}{2}d(x,x^*)) $ and $ x'_{k^*}=\gamma(t_{x'_{k^*}})\in B_d(x_{k^*},\delta_{k^*})\subset B_d(x_{k^*},\frac{1}{2}d(x,x_{k^*})) $ where, $B_d(x,\frac{1}{2}d(x,x_{k^*}))\cap B_d(x^*,\frac{1}{2}d(x,x_{k^*}))=\emptyset. $ Therefore, $ t_x\neq t_{x'_{k^*}} $ and as  result from above $ t_x\leq t_{x'_{k^*}} $, we receive that $ t_x< t_{x'_{k^*}} $. Similarly, we prove that $ t_{x''_{k^*}}<t_y. $  Using now the assumption of theorem, the boundary of ball $ B_d\left(x_{k^*},\delta_{k^*} \right)\subset U\setminus M  $ is path-connected. Therefore, there exists a continuous function $ \tilde{\gamma}_{x'_{k^*},x''_{k^*}}:[t_{x'_{k^*}},t_{x''_{k^*}}]\to\partial B_d\left(x_{k^*},\delta_{k^*} \right)  $ such that, $ \tilde{\gamma}_{x'_{k^*},x''_{k^*}}(t_{x'_{k^*}}) =x'_{k^*}$ and $\tilde{\gamma}_{x'_{k^*},x''_{k^*}}(t_{x''_{k^*}}) =x''_{k^*}.$ Notice that, from Claim \ref{cl_0}, $ \gamma\left( t^*,t_y\right)\cap M=\emptyset,  $ resulting that $ \gamma\left( \left(t_{x''_{k^*}},t_y \right)\right)  \cap M=\emptyset. $ Another important information, is the fact that, $ \gamma\left( \left(t_x,t_{x'_{k^*}} \right)\right)  \cap M=\emptyset. $ Indeed, this is obvious from the fact that, $ (t_x,t_{x'_{k^*}})\subset(t_x,\tilde{t}^*)=(t_x,t_1) $ and $ \gamma\left(\left( t_x,t_1\right)  \right)\cap M=\emptyset,  $ from the Claim \ref{cl_1} (vii).  \\
      We define the path $ \tilde{\gamma}:[t_x,t_y]\to U\setminus M, $ as follows:
      \begin{equation}
      	\tilde{\gamma}(t):=\begin{cases}
      		\gamma(t),\ & t\in[t_x,t_{x'_{k^*}}]\nonumber\\
      		\tilde{\gamma}_{x'_{k^*},x''_{k^*}}(t),\ & t\in[t_{x'_{k^*}},t_{x''_{k^*}}]\nonumber\\
      		\gamma(t),\ & t\in[t_{x''_{k^*}},t_y]\nonumber.
      	\end{cases}
      \end{equation}
      \\
       \item[(ii)] $ (k^*\geq 2) $, equivalently, $ \{1,2\}\subset K. $\\
		We define, $ \forall k\in K, $
		\begin{gather}
			r_k:=\min\left\lbrace d(x_k,x_i):\ i\in K\setminus\{k\}\right\rbrace\ \text{and}\
			\delta_{x_k}:=\frac{1}{2}\min\left\lbrace\epsilon_{x_k},\tilde{\delta}_{x_k},d(x_k,x),d(x_k,y),r_k \right\rbrace\nonumber.
		\end{gather}
		We make the following observations for the open balls $ B_d\left( x_k,\delta_{x_k}\right),\ k\in K $.
		\begin{itemize}
			\item[(i)] $ \forall k\in K,\ \forall\delta\in(0,\delta_{x_k}],\ \bar{B}_d\left( x_k,\delta\right)\subset U   $ 
			\item[(ii)] $ \forall k\in K,\ \forall\delta\in(0,\delta_{x_k}],\ \bar{B}_d\left( x_k,\delta\right)\cap M=\{x_k\}  $
			\item[(iii)]  $ \forall k\in K,\ \ \forall\delta\in(0,\delta_{x_k}],\ \bar{B}_d\left( x_k,\delta\right)\cap\left\lbrace x,y\right\rbrace  =\emptyset  $
			\item[(iv)]  $ \forall i,j\in K, i\neq j,\ \forall\delta\in(0,\delta_{x_k}],\forall\tilde{\delta}\in(0,\delta_{x_j}],\ B_d\left( x_i,\delta\right)\cap B_d(x_j,\tilde{\delta})=\emptyset  $
			\item[(vi)] $ \forall k\in K,\ \ \forall\delta\in(0,\delta_{x_k}],\ \partial B_d\left( x_k,\delta\right)\subset U\setminus M $.
		\end{itemize}

		 Clearly we see that, 
		\begin{gather}
			\delta_{x_1}\leq \delta'_{x_1}:=\frac{1}{2}\min\left\lbrace\epsilon_{x_1},d(x_1,x) \right\rbrace\ \text{and}\
			\delta_{x_1}\leq\delta''_{x_1}:=\frac{1}{2}\min\left\lbrace \epsilon_{x_1},d(x_1,x_2) \right\rbrace\nonumber.
		\end{gather}
		Then Proposition \ref{main_prop}, $ \forall\delta\in(0,\delta_{x_1}] $
		\begin{gather}
			\emptyset\neq K'_{x_1}:=\gamma\left([t_x,t_1]\right)  \cap\partial B_d\left( x_1,\delta\right) \subset U\setminus M\ \text{and}\
			\emptyset \neq K''_{x_1}:=\gamma\left([ t'_1,t_2]\right)  \cap\partial B_d\left( x_1,\delta\right) \subset U\setminus M\nonumber.
		\end{gather}
		We fix such $ \delta_1\in(0,\delta_{x_1}) $. Therefore, 
		\begin{gather}
			\exists z'_1\in[t_x,t_1]:\ x'_1:=\gamma(z'_1)\in\partial B_d\left(x_1,\delta_1 \right),\ 
			\exists z''_1\in[t'_1,t_2]:\ x''_1:=\gamma(z''_1)\in\partial B_d\left(x_1,\delta_1 \right)\nonumber.
		\end{gather}
		We define 
		\begin{gather}
			t_{x'_1}:=\inf\left\lbrace t\in[t_x,t_1]:\ \gamma(t)\in\partial B_d(x_1,\delta_1) \right\rbrace\equiv\inf S_1,\nonumber\\
			t_{x''_1}:=\sup\left\lbrace t\in[t'_1,t_2]:\ \gamma(t)\in\partial  B_d(x_1,\delta_1)\right\rbrace\equiv\sup S'_1\nonumber\\
			x'_1:=\gamma(t_{x'_1}),\ \text{and}\ x''_1:=\gamma(t_{x''_1})\nonumber.
		\end{gather}
		First, we observe that $ S_1\neq\emptyset, $ since $ z'_1\in S_1 $ and $ S'_1\neq\emptyset, $ since $ z''_1\in S'_1. $ Moreover, $ S_1 $ is lower bounded by $ t_x $ and $ S'_1 $ is upper bounded by $ t_2 $. As a result from the last, $ t_{x'_1}, t_{x''_1} $ are well defined. Furthermore, using the continuity of $ \gamma $ and the fact that $ \partial B_d(x_1,\delta_1) $ is closed set, we conclude that, $\gamma(t_{x'_1})\in \partial B_d(x_1,\delta_1 )\subset U\setminus M,\ \text{and}\ \gamma(t_{x''_1})\in \partial B_d(x_1,\delta_1)\subset U\setminus M. $  Now, we claim that $ t_{x'_1}<t_1\leq t'_1<t_{x''_1} $. Obviously, it holds $ t_1\leq t'_1 $ from Claimi \ref{cl_1}(iii). At first, we notice that $ t_1 $ is an upper bound of $ S_1 $ and $ t'_1 $ is a lower bound of $ \tilde{S}_1 $. Therefore, from the definition of $ t_{x'_1} $ and $ t_{x''_1} $, it follows that $ t_{x'_1}\leq t_1 $ and $ t'_1\leq t_{x''_1} $. Next we observe that,
		\begin{gather}
		x'_1\equiv\gamma(t_{x'_1})\in \partial B_d(x_1,\delta_1)\ \text{and}\ x''_1\equiv\gamma(t_{x''_1})\partial B_d(x_1,\delta_1)\nonumber\\ 
		x_1\equiv\gamma(t_1)=\gamma(t'_1)\notin\partial B_d(x_1,\delta_1).
		\end{gather}
		From the last, we receive that, $ t_{x'_1}<t_1  $ and $ t'_1<t_{x''_1}.  $
		 Using now the assumption of theorem, the boundary of ball $ B_d\left(x_1,\delta_1 \right)\subset U\setminus M  $ is path-connected. Therefore, there exists a continuous function $ \tilde{\gamma}_{x'_1,x''_1}:[t_{x'_1},t_{x''_1}]\to\partial B_d\left(x_1,\delta_1 \right)\subset U\setminus M  $ such that, $ \tilde{\gamma}_{x'_1,x''_1}(t_{x'_1}) =x'_1$ and $\tilde{\gamma}_{x'_1,x''_1}(t_{x''_1}) =x''_1.  $ Another information to mention, is the fact that, $ \gamma\left((t_x,t_{x'_1}) \right)\cap M=\emptyset.  $ Indeed, since $ (t_x,t_{x'_1})\subset(t_x,t_1) $ and from Claim \ref{cl_1} v(ii), we know that $ \gamma\left((t_x,t_1) \right)\cap M=\emptyset,  $ we conclude that, $ \gamma\left((t_x,t_{x'_1}) \right)\cap M=\emptyset. $
		\\
		The next step is the extraction of appropriate intermediate points $ (t_{x'_k})_{k=2,\dots,k^*},\ (t_{x''_k})_{k=2,\dots,k^*}\subset [t_x,t_y] $, using the Principle of Induction. In specific, we define inductively the following sequences of points $ (t_{x'_k})_{k=2,\dots,k^*} $ and  $ (t_{x''_k})_{k=2,\dots,k^*}  $ such that the corresponding sequences $ \left( \gamma(t_{x'_k})\right)_{k=2,\dots,k^*},\ \left(\gamma(t_{x''_k}) \right)_{k=2,\dots,k^*}\subset \partial B_d(x_k,\delta_k)   $ where $ \delta_k $ appropriate radious. We will prove the existence of the above sequences inductively.
		\begin{claim}\label{cl_2}
			Let, $ k^*\in\N, $ with $ k^*\geq 2. $ Then, for every $ k\in K\setminus\{1\}=\{2,\dots,k^*\}, $ there exists $ (t_{x'_i})_{i=2,\dots,k},\ (t_{x''_i})_{i=2,\dots,k}\subset [t_x,t_y] $ and $ \delta_i\in  (0,\delta_{x_i}),\ \forall i\in\{2,\dots,k\} $ such that:
			\begin{itemize}
				\item [(i)] If $ k\in K $ with $ k^*\geq 3 $ then, $\forall i\in\{2,3,\dots,k-1\},\ t_{x''_{i-1}}<t_{x'_i}<t_i\leq t'_i<t_{x''_i}<t_{i+1}$ and if $ k=k^* $ then $ t_{x''_{k^*}}<t_y $. 
				\item[(ii)]  $\forall i\in\{2,3,\dots,k\},\ x'_i:=\gamma(t_{x'_i}),\ x''_i:=\gamma(t_{x''_i})\in\partial B_d(x_i,\delta_i)\subset U\setminus M$
				\item[(iii)] $\forall i\in\{2,3,\dots,k\},\ \gamma\left( \left(t_{x''_{i-1}},t_{x'_i} \right) \right)\cap M=\emptyset,$ equivalently, $ \gamma\left( \left(t_{x''_{i-1}},t_{x'_i} \right) \right)\subset U\setminus M $
				\item[(iv)]If $ k=k^* $, then $ \gamma\left( (t_{x''_{k^*}},t_y)\right) \cap M=\emptyset $.
				\item[(v)] For all $ i\in\{1,2,\dots, k\} $, there exists a continuous function $ \tilde{\gamma}_{x'_k,x''_k}:[t_{x'_k},t_{x''_k}]\to\partial B_d\left(x_k,\delta_k \right)  $ such that, $ \tilde{\gamma}_{x'_k,x''_k}(t_{x'_k}) =x'_k$ and $\tilde{\gamma}_{x'_k,x''_k}(t_{x''_k}) =x''_k$.
			\end{itemize}
		\end{claim}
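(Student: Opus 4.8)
The plan is to prove Claim \ref{cl_2} by induction on $k\in\{2,\dots,k^*\}$, each step adjoining the detour data for a single new center to the data already constructed, so that the case $k$ carries the indices $2,\dots,k$ on top of the index-$1$ detour built in the main text. For the base step $k=2$ and, verbatim, for the passage from $k$ to $k+1$ with $k+1\le k^*$, I would build the detour around the center $x_{k+1}=\gamma(t_{k+1})$ exactly as the detour around $x_1$ was built: produce an entry crossing and an exit crossing of a sphere $\partial B_d(x_{k+1},\delta_{k+1})$, then join them by an arc lying on that sphere. Concretely, I apply Proposition \ref{main_prop} with $x_0=x_{k+1}$ on the subinterval $[t_{x''_k},t_{k+1}]$, taking the second endpoint to be $\gamma(t_{x''_k})$, to obtain for every small $\delta$ a nonempty intersection with $\partial B_d(x_{k+1},\delta)$ contained in $U$; fixing a radius $\delta_{k+1}\in(0,\delta_{x_{k+1}})$ I set $t_{x'_{k+1}}$ to be the infimum of the crossing times there. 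Symmetrically, applying Proposition \ref{main_prop} on $[t'_{k+1},t_{k+2}]$ (or on $[t^*,t_y]$, with endpoint $y$, when $k+1=k^*$) and taking the supremum of crossing times yields $t_{x''_{k+1}}$. Then $x'_{k+1}:=\gamma(t_{x'_{k+1}})$ and $x''_{k+1}:=\gamma(t_{x''_{k+1}})$ lie on $\partial B_d(x_{k+1},\delta_{k+1})$ (the sphere is closed and $\gamma$ continuous) and lie in $U\setminus M$ by observation (vi) on the balls $B_d(x_k,\delta_{x_k})$.

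The verifications of (i), (ii), (v) then follow the template of the $x_1$ case. Part (ii) is the closedness/continuity argument just described. For (i), the inner inequalities $t_{x'_{k+1}}<t_{k+1}\le t'_{k+1}<t_{x''_{k+1}}$ hold because $x_{k+1}=\gamma(t_{k+1})=\gamma(t'_{k+1})$ is the center of the ball and hence cannot lie on its sphere, so neither the infimum nor the supremum is attained at $t_{k+1}$ or $t'_{k+1}$; the outer bounds $t_{x''_{k+1}}<t_{k+2}$ (resp. $t_{x''_{k^*}}<t_y$) use in addition that $\gamma(t_{x''_{k+1}})\ne x_{k+2}$ (resp. $\ne y$), since these points lie in disjoint balls by observations (iii)--(iv). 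Part (v) is immediate from the theorem hypothesis that $\partial B_d(x_{k+1},\delta_{k+1})$ is path-connected, which supplies the arc $\tilde\gamma_{x'_{k+1},x''_{k+1}}$. For the $M$-avoidance statements I argue by inclusion of time-intervals: since $t'_k\le t_{x''_k}$ and $t_{x'_{k+1}}<t_{k+1}$, one has $(t_{x''_k},t_{x'_{k+1}})\subset(t'_k,t_{k+1})$, and Claim \ref{cl_1}(vii) applied to the index $k+1$ gives $\gamma\big((t'_k,t_{k+1})\big)\cap M=\emptyset$, which is (iii); for (iv), when $k+1=k^*$ we have $x_{k^*}=x^*$ and $t'_{k^*}=t^*$, so $(t_{x''_{k^*}},t_y)\subset(t^*,t_y)$ and Claim \ref{cl_0}(iii) yields $\gamma\big((t^*,t_y)\big)\cap M=\emptyset$.

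The step I expect to be the main obstacle is the ordering relation $t_{x''_k}<t_{x'_{k+1}}$, that is, that the curve's last departure from the previous detour sphere genuinely precedes its first arrival at the next one. A priori nothing stops $\gamma$ from oscillating between neighborhoods of $x_k$ and of $x_{k+1}$, in which case the first crossing of $\partial B_d(x_{k+1},\delta_{k+1})$ taken over the whole interval $[t'_k,t_{k+1}]$ could occur before $t_{x''_k}$, destroying the ordering. This is exactly why I would search for the entry crossing only over $[t_{x''_k},t_{k+1}]$ rather than over $[t'_k,t_{k+1}]$: the pairwise disjointness of the balls (observation (iv)) forces $d\big(\gamma(t_{x''_k}),x_{k+1}\big)\ge\delta_{x_{k+1}}>\delta_{k+1}$, so $\gamma(t_{x''_k})$ lies strictly outside $\bar B_d(x_{k+1},\delta_{k+1})$ while $\gamma(t_{k+1})=x_{k+1}$ is its center. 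This keeps Proposition \ref{main_prop} applicable on $[t_{x''_k},t_{k+1}]$ and, since $t_{x''_k}$ is not itself a crossing of the new sphere, forces $t_{x''_k}<t_{x'_{k+1}}$. Once this restricted search is in place, the remaining inequalities and the induction close without further difficulty.
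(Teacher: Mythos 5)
Your construction follows the paper's own proof essentially step for step: the same induction on $k$, the same entry interval $[t_{x''_k},t_{k+1}]$ and exit interval $[t'_{k+1},t_{k+2}]$ (resp.\ $[t^*,t_y]$ when $k+1=k^*$), the same $\inf$/$\sup$ definitions of the crossing times, the ball--disjointness argument for the strict orderings, Claim \ref{cl_1}(vii) and Claim \ref{cl_0}(iii) for the $M$-avoidance, and the path-connectedness of the spheres for the detour arcs. You also correctly isolated the crux, namely that the entry search must be run over $[t_{x''_k},t_{k+1}]$ rather than $[t'_k,t_{k+1}]$, with disjointness of $B_d(x_k,\delta_{x_k})$ and $B_d(x_{k+1},\delta_{x_{k+1}})$ forcing $t_{x''_k}<t_{x'_{k+1}}$.

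There is, however, one genuine gap: the admissible radius range $\delta_{k+1}\in(0,\delta_{x_{k+1}})$ is too generous for Proposition \ref{main_prop} as it is actually proved. The proposition only yields a crossing for $\delta\le\delta_0$, and the threshold produced by its proof is $\delta_0=\tfrac{1}{2}\min\{\epsilon_{x_0},d(z,x_0)\}$; the factor $\tfrac{1}{2}$ is used essentially there, because the exterior claim ($z\in ext\left(B_d(x_0,\tilde\delta)\right)$) is proved by placing a ball of the \emph{same} radius $\tilde\delta$ around $z$. On the entry segment the relevant endpoint is $z=x''_k$, and all that ball-disjointness gives is $d(x''_k,x_{k+1})\ge\delta_{x_{k+1}}$, so the proposition only licenses radii up to $\tfrac{1}{2}\min\{\epsilon_{x_{k+1}},d(x''_k,x_{k+1})\}$, which in the worst case (when $d(x''_k,x_{k+1})$ barely exceeds $\delta_{x_{k+1}}$) is about $\tfrac{1}{2}\delta_{x_{k+1}}$. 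A choice such as $\delta_{k+1}=\tfrac{3}{4}\delta_{x_{k+1}}$ is then not covered, even though, as you observe, $x''_k$ still lies strictly outside $\bar{B}_d(x_{k+1},\delta_{k+1})$. That observation does make the conclusion true, but only via a strengthened version of the proposition (exterior membership for every $\delta<d(z,x_0)$, obtained by using the ball $B_d\left(z,\,d(z,x_0)-\delta\right)$ instead), which you would have to state and prove separately; as written, you are citing Proposition \ref{main_prop} beyond its warranty. The paper sidesteps exactly this by shrinking the radius: it sets $\delta^*_{x_{k+1}}:=\tfrac{1}{2}\min\left\{\delta_{x_{k+1}},d(x_{k+1},x''_k)\right\}$ and chooses $\delta_{k+1}\in(0,\delta^*_{x_{k+1}})$, after which the proposition applies verbatim. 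Either fix is a one-line patch (note the exit side needs none, since $\delta_{x_{k+1}}\le\tfrac{1}{2}d(x_{k+1},x_{k+2})$ and $\delta_{x_{k+1}}\le\tfrac{1}{2}d(x_{k+1},y)$ are built into its definition); with it, your induction closes exactly as in the paper.
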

		\end{itemize}
		\begin{proof}
			We prove this by Induction. In specific, for $ k=2, $ we set $ \delta^*_{x_2}:=\frac{1}{2}\min\left\lbrace \delta_{x_2},d(x_2,x''_{1})\right\rbrace<\delta_{x_2} $. We claim that $ \delta^*_{x_2}>0 $, equivalently, $ d(x_2,x''_{1})>0 $, since we already know that, $ \delta_{x_2}>0. $  Indeed, $ x_2\in B_d\left( x_2,\delta_{x_2}\right) $ and $ x''_1\in\partial B_d\left( x_1,\delta_1\right)\subset B_d\left( x_1,\delta_{x_{1}}\right).   $ From property, $ (iv) $ for open balls, we receive that, $  B_d\left( x_2,\delta_{x_2}\right) \cap  B_d\left( x_1,\delta_{x_{1}}\right)=\emptyset. $ Therefore, $ x_2\neq x''_1. $ We observe that,
			\begin{gather}
				\delta^*_{x_2}\leq\frac{1}{2}\min\left\lbrace\epsilon_{x_2},d(x_2,x''_1) \right\rbrace\ \text{and}\ \delta^*_{x_2}\leq\begin{cases}
					\frac{1}{2}\min\left\lbrace\epsilon_{x_2},d(x_2,x_3) \right\rbrace,\ & k^*\geq 3 \nonumber\\
					\frac{1}{2}\min\left\lbrace\epsilon_{x_2},d(x_2,y) \right\rbrace,\ & k^*=2 \nonumber.
				\end{cases}
			\end{gather}
			Now, from Proposition \ref{main_prop}, $\forall \delta\in(0,\delta^*_{x_2}]$
			\begin{gather}
				\emptyset\neq K'_{x_2}:=\gamma\left([t_{x''_{1}},t_2]\right) \cap\partial B_d\left( x_2,\delta\right) \subset U\setminus M\ \text{and}\\
				\emptyset \neq K''_{x_2}:=\begin{cases}
					\gamma\left([t'_2,t_{3}]\right) \cap\partial B_d\left( x_2,\delta\right) \subset U\setminus M,\ & k^*\geq 3\nonumber\\
					\gamma\left([t^*,t_y]\right) \cap\partial B_d\left( x_2,\delta\right) \subset U\setminus M,\ & k^*=2 \nonumber.
				\end{cases}
			\end{gather}
			We fix such $ \delta_2\in(0,\delta^*_{x_2}). $ As a result from the last,
			\begin{gather}
				\exists z'_2\in[t_{x''_{1}},t_2],\ \gamma(z'_2)\in\partial B_d\left(x_2,\delta_2 \right),\ 
				\exists z''_2\in\begin{cases}
					[t'_2,t_{3}],\ & k^*\geq 3\nonumber\\
					[t^*,t_y],\ & k^*=2\nonumber
				\end{cases},\ \gamma(z''_2)\in\partial B_d\left(x_2,\delta_2 \right)\nonumber.
			\end{gather} 
			Next, we define
			\begin{gather}
			\begin{align}
				t_{x'_2}&:=\inf\left\lbrace t\in[t_{x''_{1}},t_2]:\ \gamma(t)\in\partial B_d(x_2,\delta_2)\right\rbrace\equiv\inf S_2\nonumber\\
				t_{x''_2}&:=\begin{cases}
					\sup\left\lbrace t\in[t'_2,t_{3}]:\ \gamma(t)\in\partial  B_d(x_2,\delta_2)\right\rbrace\equiv\sup S'^{(k^*\geq 3)}_2,\ & k^*\geq 3\nonumber\\
					\sup\left\lbrace t\in[t^*,t_y]:\ \gamma(t)\in\partial  B_d(x_2,\delta_2)\right\rbrace\equiv\sup S'^{(k^*=2)}_2,\ & k^*=2\nonumber.
							\end{cases}
			\end{align}
		\end{gather}
			\\
			We claim that $t_{x'_2},t_{x''_2}  $ are well defined. Indeed, $ S_2\neq\emptyset, $ since $ z'_2\in S_2 $ and $ S'_2\neq\emptyset, $ since $ z''_2\in S'^{(k^*\geq 3)}_2,\\ \left( \ z''_2\in S'^{(k^*=2)}_2\right)  $. Moreover, $ S_2 $ is lower bounded by $ t_{x''_1} $ and $ S'^{(k^*\geq 3)}_2 $ is upper bounded by $ t_3.  $ Also $ S'^{(k^*=2)}_2  $ is upper bounded by $ t_y $. Therefore, $t_{x'_2},t_{x''_2}  $ are well defined.  Furtheromore, using the continuity of $ \gamma $ and the fact that $ \partial B_d(x_2,\delta_2) $ is closed set, we conclude that, $x'_2\equiv\gamma(t_{x'_2})\in \partial B_d(x_2,\delta_2),\ \text{and}\ x''_2\equiv\gamma(t_{x''_2})\in \partial B_d(x_2,\delta_2). $  As we did in the proof for the case where $ k=1 $, we obtain again that $ t_{x'_2}<t_2\leq t'_2<t_{x''_2}. $
			Now, we proceed to the proof of $ t_{x''_1}<t_{x'_2} $. First, we notice that $ t_{x''_1} $ is a lower bound of $ S_2 $ and as a result from the definition of infimum, we obtain that, $ t_{x''_1}\leq t_{x'_2} $. We claim, that $ t_{x''_1}< t_{x'_2} $. For this, it is sufficent to prove that $ t_{x''_1}\neq t_{x'_2} $, since $ t_{x''_1} $ is a lower bound of $ S_2. $ At first, we see that, the following holds:
			\begin{gather}
				x''_1\in\partial B_d(x_1,\delta_1)\subset\bar{B}_d(x_1,\delta_1)\subset B_d(x_1,\delta_{x_1}),\ \text{since}\ \delta_1\leq\delta_{x_1}\nonumber\\
				x'_2\in\partial \bar{B}_d(x_2,\delta_2)\subset B_d(x_2,\delta_{x_2}),\ \text{since}\ \delta_2\leq\delta^*_{x_2}<\delta_{x_2}\nonumber.
			\end{gather}
			Using now property, $ (iv), $ we obtain that, $  B_d(x_1,\delta_{x_1})\cap B_d(x_2,\delta_{x_2})=\emptyset.  $ Therefore, $ x''_1\neq x'_2, $ resulting that, $ t_{x''_1}\neq t_{x'_2}, $ which leads to $ t_{x''_1}< t_{x'_2} $.	Moreover, we claim that $ t_{x''_2}<t_3 $ in case where $ k^*\geq 3. $ Since, $ t_3 $ is an upper bound of $ S'^{(k^*\geq 3)}_2, $ it follows that $ t_{x''_2}\leq t_3 $. It remains to prove that, $ t_{x''_2}\neq t_3 $. Indeed, we observe that:
			\begin{gather}
				 x''_2\in \partial B_d(x_2,\delta_2) \subset\bar{B}_d(x_2,\delta_2)\subset B_d(x_2,\delta_{x_2})\ \text{and}\ x_3=\gamma(t_3)\in B_d(x_3,\delta_{x_3}).\nonumber
			\end{gather}
			Using now property $ (iv) $, we obtain that $ B_d(x_2,\delta_{x_2})\cap B_d(x_3\delta_{x_3}). $ Therefore, $ x''_2\neq x_3, $ resulting that $ t_{x''_2}\neq t_3. $ If now, $ k^*=2, $  we prove similarly that $ t_{x''_2}<t_y. $ In addition, $ \gamma\left( (t_{x''_1},t_{x'_2})\right)\cap M=\emptyset.  $ Indeed, this is true from the fact that $ \gamma\left((t'_1,t_2) \right)\cap M=\emptyset  $, (see Claim \ref{cl_1} (vii)) and $ (t_{x''_1},t_{x'_2})\subset(t_{x''_1},t_2). $ Also in case, where $ k^*=2 $, then $ \gamma\left( (t_{x''_2},t_y)\right)\cap M=\emptyset,  $ since $ (t_{x''_2},t_y)\subset (t^*,t_y)  $ and from Claim \ref{cl_0} (iii), $ \gamma((t^*,t_y))\cap M=\emptyset. $ From the assumption of theorem, we know that the boundary of the ball $ B_d\left(x_{2},\delta_{2} \right)\subset U\setminus M  $ is path-connected. Therefore, there exists a continuous function $ \tilde{\gamma}_{x'_{2},x''_{2}}:[t_{x'_{2}},t_{x''_{2}}]\to\partial B_d\left(x_{2},\delta_{2} \right)  $ such that $ \tilde{\gamma}_{x'_{2},x''_{2}}(t_{x'_{2}}) =x'_{2}$ and $\tilde{\gamma}_{x'_{2},x''_{2}}(t_{x''_{2}}) =x''_{2}.$
			\\
			Next, let us suppose that for a random $ k'\in\{2,\dots,k^*-1\} $ with $ k^*\geq 3 $, we generated the following sequences, $ (t_{x'_i})_{i=2,\dots,k'},\  (t_{x''_i})_{i=2,\dots,k'}\subset [t_x,t_y] $ and also $ \delta\in(0,\delta_{x_i}),\ \forall i_{i=2,\dots,k'}  $ which satisfy properties $ (i)-(v) $ of the Claim. (Inductive Step). We set, $ x'_i:=\gamma(t_{x'_i}),x''_i:=\gamma(t_{x''_i}),\ \forall i\in\{2,3,\dots,k'\} $. The target now, is to prove the statement in the case where $ k=k'+1 $. In order to prove the last, it is sufficcent from the Inductive step, to prove the existence of terms $ t_{x'_{k'+1}},\ t_{x''_{k'+1}}\subset I $ and $ \delta_{k'+1}\subset (0,\delta_{x_{k'+1}}) $ satisfying the following:
			\begin{itemize}
			\item If,	$ k'+1<k^* $,then $ t_{x''_{k'}}<t_{x'_{k'+1}}<t_{k'+1}\leq t'_{k'+1}<t_{x''_{k'+1}} $ and if $ k'+1=k^* $,then $ t_{x''_{k'+1}}<t_y $ 
			\item $ \gamma(t_{x'_{k'+1}}),\ \gamma(t_{x''_{k'+1}})\in\partial B_d(x_{k'+1},\delta_{k'+1}) $
			\item $ \gamma\left((t_{x''_{k'}},t_{x'_{k'+1}})\right) \cap M=\emptyset  $
			\item If $ k'+1=k^* $, then $ \gamma\left( (t_{x''_{k'+1}},t_y)\right)\cap M=\emptyset $
			\item There exists a continuous path $ \tilde{\gamma}_{x'_{k'+1},x''_{k'+1}}:[t_{x'_{k'+1},t_{x''_{k'+1}}}]\to\partial B_d(x_{k'+1},\delta_{k'+1}), $ with \\ $ \tilde{\gamma}_{x'_{k'+1},x''_{k'+1}}(t_{x'_{k'+1}}) =x'_{k'+1} $ and $ \tilde{\gamma}_{x'_{k'+1},x''_{k'+1}}(t_{x''_{k'+1}})=x''_{k'+1} $.
			\end{itemize}
			We set $ \delta^*_{x_{k'+1}}:=\frac{1}{2}\min\left\lbrace\delta_{x_{k'+1}},d(x_{k'+1},x''_{k'}) \right\rbrace  $. We claim that $ \delta^*_{x_{k'+1}}>0, $ equivalently, $ d(x_{k'+1},x''_{k'})>0, $ since we already know that $ \delta_{x_{k'+1}}>0. $ lndeed, $ x_{k'+1}\in B_d(x_{k'+1},\delta_{x_{k'+1}}) $ and $ x''_{k'}\in\partial B_d(x_{k'},\delta_{k'})\subset B_d(x_{k'},\delta_{x_{k'}}). $ From property, (iv) for open balls, we receive that $ B_d(x_{k'+1},\delta_{x_{k'+1}})\cap B_d(x_{k'},\delta_{x_{k'}})=\emptyset.  $ Therefore, $ x_{k'+1}\neq x''_{k'}. $ Moreover, 
			\begin{gather}
				\delta^*_{x_{k'+1}}\leq\frac{1}{2}\min\left\lbrace \epsilon_{x_{k'}},d(x_{k'+1},x''_{k'})\right\rbrace\ \text{and}\ \delta^*_{x_{k'+1}}\leq\begin{cases}
					\frac{1}{2}\min\left\lbrace \epsilon_{x_{k'}},d(x_{k'+1},x_{k'+2})\right\rbrace,\ & k'+1<k^*\nonumber\\
					\frac{1}{2}\min\left\lbrace \epsilon_{x_{k'}},d(x_{k'+1},y)\right\rbrace,\ & k'+1=k^*\nonumber.
				\end{cases}
			\end{gather}
			Now, from Proposition \ref{main_prop}, $ \forall\delta\in(0,\delta^*_{x_{k'+1}}] $
			\begin{gather}
				\emptyset\neq K'_{x_{k'+1}}:=\gamma\left( [t_{x''_{k'}},t_{k'+1}] \right)\cap\partial B_d(x_{k'+1},\delta)\subset U\setminus M\ \text{and}\nonumber\\	\emptyset\neq K''_{x_{k'+1}}:=\begin{cases}
					\gamma\left( [t'_{k'+1},t_{k'+2}] \right)\cap\partial B_d(x_{k'+1},\delta)\subset U\setminus M,\ & k'+1<k^*\nonumber\\
					\gamma\left( [t^*,t_y] \right)\cap\partial B_d(x_{k'+1},\delta)\subset U\setminus M,\ & k'+1=k^*\nonumber.
				\end{cases}
			\end{gather}
			We fix such $ \delta_{k'+1}\in(0,\delta^*_{x_{k'+1}} ).  $ As a result from the last, 
			\begin{gather}
				\exists z'_{k'+1}\in[t_{x''_{k'}},t_{k'+1}],\ \gamma(z'_{k'+1})\in\partial B_d(x_{k'+1},\delta_{k'+1}),\nonumber\\ \exists z''_{k'+1}\in\begin{cases}
					[t'_{k'+1},t_{k'+2}],\ & k'+1<k^*\nonumber\\
					[t^*,t_y],\ &k'+1=k^*
				\end{cases},\ \gamma(z''_{k'+1})\in\partial B_d(x_{k'+1},\delta_{k'+1})\nonumber.
			\end{gather}
			Next, we define
			\begin{align}
				t_{x'_{k'+1}}&:=\inf\left\lbrace t\in[t_{x''_{k'}},t_{k'+1}]:\ \gamma(t)\in\partial B_d(x_{k'+1},\delta_{k'+1}) \right\rbrace\equiv\inf S_{k'+1}\nonumber\\
				t_{x''_{k'+1}}&:=\begin{cases}
					\sup\left\lbrace t\in[t'_{k'+1},t_{k'+2}]:\ \gamma(t)\in\partial B_d(x_{k'+1},\delta_{k'+1}) \right\rbrace\equiv\sup S'^{(k'+1<k^*)}_{k'+1},\ & k'+1<k^*\nonumber\\
					\sup\left\lbrace t\in[t^*,t_y]:\ \gamma(t)\in\partial B_d(x_{k'+1},\delta_{k'+1}) \right\rbrace\equiv\sup S'^{(k'+1=k^*)}_{k'+1},\ & k'+1=k^*\nonumber.
				\end{cases}
			\end{align}
		
		We set, $ x'_{k'+1}:=\gamma(t_{x'_{k'+1}}) $ and $ x''_{k'+1}:=\gamma(t_{x''_{k'+1}}). $
		We claim that $t_{x'_{k'+1}},t_{x''_{k'+1}}  $ are well defined. Indeed, $ S_{k'+1}\neq\emptyset, $ since $ z'_{k'+1}\in S_{k'+1} $ and $ S'^{(k'+1<k^*)}_{k'+1}\neq\emptyset, $ since $ z''_{k'+1}\in S'^{(k'+1<k^*)}_{k'+1},\\ \left(  z''_{k'+1}\in S'^{(k'+1=k^*)}_2\right)  $. Moreover, $ S_{k'+1} $ is lower bounded by $ t_{x''_{k'}} $ and $ S'^{(k'+1<k^*)}_{k'+1} $ is upper bounded by $ t_{k'+2}.  $ Also $ S'^{(k'+1=k^*)}_{k'+1}  $ is upper bounded by $ t_y $. Therefore, $t_{x'_{k'+1}},t_{x''_{k'+1}}  $ are well defined.  Furthermore, using the continuity of $ \gamma $ and the fact that $ \partial B_d(x_{k'+1},\delta_{k'+1}) $ is closed set, we conclude that, $x'_{k'+1}\equiv\gamma(t_{x'_{k'+1}})\in \partial B_d(x_{k'+1},\delta_{k'+1}),\ \text{and}\ x''_{k'+1}\equiv\gamma(t_{x''_{k'+1}})\in \partial B_d(x_{k'+1},\delta_{k'+1}). $  As we did in the proof for the case where $ k=1 $, we obtain again that $ t_{x'_{k'+1}}<t_{k'+1}\leq t'_{k'+1}<t_{x''_{k'+1}}. $
		Now, we proceed to the proof of $ t_{x''_{k'}}<t_{x'_{k'+1}} $. First, we notice that $ t_{x''_{k'}} $ is a lower bound of $ S_{k'+1} $ and as a result from the definition of infimum, we obtain that, $ t_{x''_{k'}}\leq t_{x'_{k'+1}} $. We claim, that $ t_{x''_{k'}}< t_{x'_{k'+1}} $. For this, it is sufficent to prove that $ t_{x''_{k'}}\neq t_{x'_{k'+1}} $, since $ t_{x''_{k'}} $ is a lower bound of $ S_{k'+1}. $ At first, we see that, the following holds:
		\begin{gather}
			x''_{k'}\in\partial B_d(x_{k'},\delta_{k'})\subset\bar{B}_d(x_{k'},\delta_{k'})\subset B_d(x_{k'},\delta_{x_{k'}}),\ \text{since}\ \delta_{k'}<\delta_{x_{k'}}\nonumber\\
			x'_{k'+1}\in\partial \bar{B}_d(x_{k'+1},\delta_{k'+1})\subset B_d(x_{k'+1},\delta_{x_{k'+1}}),\ \text{since}\ \delta_{k'+1}\leq\delta^*_{x_{k'+1}}<\delta_{x_{k'+1}}\nonumber.
		\end{gather}
		Using now property, $ (iv), $ we obtain that, $  B_d(x_{k'},\delta_{x_{k'}})\cap B_d(x_{k'+1},\delta_{x_{k'+1}})=\emptyset.  $ Therefore, $ x''_{k'}\neq x'_{k'+1}, $ resulting that, $ t_{x''_{k'}}\neq t_{x'_{k'+1}}, $ which leads to $ t_{x''_{k'}}< t_{x'_{k'+1}} $.	Moreover, we claim that $ t_{x''_{k'+1}}<t_{k'+2} $ in case where $ k^*\geq 3. $ Since, $ t_{k'+2} $ is an upper bound of $ S'^{(k'+1<k^*)}_{k'+1}, $ it follows that $ t_{x''_{k'+1}}\leq t_{k'+2} $. It remains to prove that, $ t_{x''_{k'+1}}\neq t_{k'+2} $. Indeed, we observe that:
		\begin{gather}
			x''_{k'+1}\in \partial B_d(x_{k'+1},\delta_{k'+1}) \subset\bar{B}_d(x_{k'+1},\delta_{k'+1})\subset B_d(x_{k'+1},\delta_{x_{k'+1}})\ \text{and}\ x_{k'+2}=\gamma(t_{k'+2})\in B_d(x_{k'+2},\delta_{x_{k'+2}})\nonumber.
		\end{gather}
		Using now property $ (iv) $, we obtain that $ B_d(x_{k'+1},\delta_{x_{k'+1}})\cap B_d(x_{k'+2}\delta_{x_{k'+2}}). $ Therefore, $ x''_{k'+1}\neq x_{k'+2}, $ resulting that $ t_{x''_{k'+1}}\neq t_{k'+2}. $ If now, $ k^*=k'+1, $  we prove similarly that $ t_{x''_{k'+1}}<t_y. $ In addition, $ \gamma\left( (t_{x''_{k'}},t_{x'_{k'+1}})\right)\cap M=\emptyset.  $ Indeed, this is true from the fact that $ \gamma\left((t'_{k'},t_{k'+1}) \right)\cap M=\emptyset  $, (see Claim \ref{cl_1} (vii)) and $ (t_{x''_{k'}},t_{x'_{k'+1}})\subset(t_{x''_{k'}},t_{k'+1}). $ Also in case, where $ k^*=k'+1 $ then $ \gamma\left( (t_{x''_{k'+1}},t_y)\right)\cap M=\emptyset,  $ since $ (t_{x''_{k'+1}},t_y)\subset (t^*,t_y)  $ and from Claim \ref{cl_0} (iii), $ \gamma((t^*,t_y))\cap M=\emptyset. $ From the assumption of theorem, we know that the boundary of the ball $ B_d\left(x_{k'+1},\delta_{k'+1} \right)\subset U\setminus M  $ is path-connected. Therefore, there exists a continuous function $ \tilde{\gamma}_{x'_{k'+1},x''_{k'+1}}:[t_{x'_{k'+1}},t_{x''_{k'+1}}]\to\partial B_d\left(x_{k'+1},\delta_{k'+1} \right)  $ such that, $ \tilde{\gamma}_{x'_{k'+1},x''_{k'+1}}(t_{x'_{k'+1}}) =x'_{k'+1}$ and $\tilde{\gamma}_{x'_{k'+1},x''_{k'+1}}(t_{x''_{k'+1}}) =x''_{k'+1}.$ By the use of Induction Principle, we conclude the existence of a finite sequence of points $ (t_{x'_i})_{i=2,\dots, k^*},  (t_{x''_i})_{i=2,\dots, k^*} \in I $ and $ \delta_i\in(0,\delta_{x_i}),\ \forall i=2,\dots,k^* $ which satisfy properties $ (i)-(v) $ of the Claim.
    \end{proof}
	Finally, we define for the case where $ k^*\geq 2 $, the path $ \hat{\gamma}:[t_x,t_y]\to U\setminus M $  as follows:
	\begin{equation}
	\hat{\gamma}(t):=\begin{cases}
		\gamma(t),\ & t\in[t_x,t_{x'_1}]\nonumber\\
		\tilde{\gamma}_{x'_1,x''_1}(t),\ & t\in[t_{x'_1},t_{x''_1}]\nonumber\\
		\gamma(t),\ & t\in[t_{x''_1},t_{x'_2}]\nonumber\\
		\vdots\     & \vdots\nonumber\\
		\gamma(t),\ & t\in[t_{x''_{k-1}},t_{x'_{k}}]\nonumber\\
		\tilde{\gamma}_{x'_k,x''_k}(t),\ & t\in[t_{x'_{k}},t_{x''_k}]\nonumber\\
		\gamma(t),\ & t\in[t_{x''_k},t_{x'_{k+1}}]\nonumber\\
		\vdots\ & \vdots\nonumber\\
		\gamma(t),\ & t\in[t_{x''_{k^*-1}},t_{x'_{k^*}}]\nonumber\\
		\tilde{\gamma}_{x'_{k^*},x''_{k^*}}(t),\ & t\in[t_{x'_{k^*}},t_{x''_{k^*}}]\nonumber\\
		\gamma(t),\ & t\in[t_{x''_{k^*}},t_y].
	\end{cases}
	\end{equation}
\\
\\
	\underline{\textbf{Case}}: $ k^*=\infty $\\
	Equivalently we have  $ K:=\N $. A direct concequence,for that case, is $ x_k\neq x^*,\ \forall k\in\N. $ In this setting we define again with a similar way, as we did before, appropriate sequence of points $ (t_{x'_k})_{k=1,2,\dots k_0},(t_{x''_k})_{k=1,2,\dots k_0}\subset I $ where $ k_0\in\N $ satisfying suitable properties. First, we define:
	   \begin{gather}
	   	\forall k\in\N\setminus\{1\},\ r_k:=\min\left\lbrace d(x_k,x_i):\ i=1,2\dots,k-1\right\rbrace
	   \end{gather}
     and
	\begin{gather}
		\delta_{x_k}:=\begin{cases}
			\frac{1}{2}\min\left\lbrace\epsilon_{x_k},\tilde{\delta}_{x_k},d(x_k,x),d(x_k,x^*),d(x_k,y) \right\rbrace,\ & k= 1\nonumber\\
			\frac{1}{2}\min\left\lbrace\epsilon_{x_k},\tilde{\delta}_{x_k},d(x_k,x),d(x_k,x^*),d(x_k,y),r_k \right\rbrace,\ & k\geq 2\nonumber.
		\end{cases}
	\end{gather}
  \\
  We proceed now to the main claim that describes the construction of the above sequences.
  
  \begin{claim}\label{cl_3}
  	For every $ k\in\N, $ there exists $ (t_{x'_i})i=1,2,\dots, k,\ (t_{x''_i})i=1,2,\dots, k\subset I $ and $ \delta_i \in(0,\delta_{x_i}),\ \forall i\in\{1,2,\dots ,k\}$ such that:
  	\begin{itemize}
  		\item[(i)] If $ k=1 $ it holds that: $ t_x<t_{x'_1}<t_1\leq t'_1<t_{x''_1}<t_2 $ and if $ k\geq 3 $ it holds that, $ t_{x''_{i-1}}<t_{x'_i}<t_i\leq t'_i<t_{x''_i}<t_{i+1},\ \forall i\in\{2,3,\dots,k\} $
  		\item[(ii)] $ \forall i\in\{1,2,\dots, k\},\  x'_i:=\gamma(t_{x'_i}),\ x''_i:=\gamma(t_{x''_i})\in\partial B_d(x_i,\delta_i)\subset U\setminus M$
  		\item[(iii)] If $ k=1 $ then $ \gamma\left( (t_x,t_{x'_1})\right) \cap M=\emptyset $ and if $ k\geq 2 $ then $ \forall i\in\{2,3,\dots,k\},\ \gamma\left((t_{x''_{i-1}},t_{x'_i}) \right)\cap M=\emptyset  $
  		\item[(iv)] For all $ i\in\{1,2,\dots, k\} $ there exists a continuous function $ \tilde{\gamma}_{x'_k,x''_k}:[t_{x'_k},t_{x''_k}]\to\partial B_d(x_k,\delta_k) $ such that $ \tilde{\gamma}_{x'_k,x''_k}(t_{x'_k})=x'_k $ and $ \tilde{\gamma}_{x'_k,x''_k}(t_{x''_k})=x''_k $.
  	\end{itemize}
  \end{claim}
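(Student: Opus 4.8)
The plan is to prove Claim \ref{cl_3} by induction on $k$, following verbatim the scheme already used for the finite case in Claim \ref{cl_2}. The only structural difference is that here the radii $r_k=\min\{d(x_k,x_i):\ i=1,\dots,k-1\}$ guard only against the \emph{previously} chosen centres, so the separation of $x_{k+1}$ from its forward neighbours $x_{k+1},x_{k+2}$ must be extracted from the isolation radii $\tilde\delta_{x_k}$ (recall $B_d(x_k,\tilde\delta_{x_k})\cap M=\{x_k\}$) rather than from $r_k$. Throughout I would invoke Proposition \ref{main_prop} exactly as before, together with Claim \ref{cl_1} for the ordering of the auxiliary parameters $t_k,t'_k$ and Claim \ref{cl_0}(iii) for the fact that $\gamma$ avoids $M$ on $(t^*,t_y)$. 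A preliminary remark I would record is the disjointness of the balls $B_d(x_i,\delta_{x_i})$: for $i<j$ one has $\delta_{x_j}\le\frac12 r_j\le\frac12 d(x_i,x_j)$ and $\delta_{x_i}\le\frac12\tilde\delta_{x_i}\le\frac12 d(x_i,x_j)$, whence $B_d(x_i,\delta_{x_i})\cap B_d(x_j,\delta_{x_j})=\emptyset$; this is the one point that must be re-checked because of the one-sided definition of $r_k$.

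For the base case $k=1$ I would first note $\delta_{x_1}\le\frac12\min\{\epsilon_{x_1},d(x_1,x)\}$, while $\delta_{x_1}\le\frac12\tilde\delta_{x_1}\le\frac12 d(x_1,x_2)$ since $x_2\in M\setminus\{x_1\}$ lies outside $B_d(x_1,\tilde\delta_{x_1})$; hence $\delta_{x_1}\le\frac12\min\{\epsilon_{x_1},d(x_1,x_2)\}$ as well. Applying Proposition \ref{main_prop} on $\gamma([t_x,t_1])$ (centre $x_1$, reference point $x$) and on $\gamma([t'_1,t_2])$ (centre $x_1$, reference point $x_2$) yields, for every $\delta\in(0,\delta_{x_1}]$, non-empty intersections with $\partial B_d(x_1,\delta)$ contained in $U\setminus M$; the containment follows from $\partial B_d(x_1,\delta)\subset\bar B_d(x_1,\delta)\subset B_d(x_1,\epsilon_{x_1})\subset U$ together with $\partial B_d(x_1,\delta)\cap M\subset B_d(x_1,\tilde\delta_{x_1})\cap M=\{x_1\}$ and $x_1\notin\partial B_d(x_1,\delta)$. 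After fixing $\delta_1\in(0,\delta_{x_1})$ I would set $t_{x'_1}=\inf\{t\in[t_x,t_1]:\ \gamma(t)\in\partial B_d(x_1,\delta_1)\}$ and $t_{x''_1}=\sup\{t\in[t'_1,t_2]:\ \gamma(t)\in\partial B_d(x_1,\delta_1)\}$, both well defined, and continuity of $\gamma$ with closedness of $\partial B_d(x_1,\delta_1)$ gives (ii). The chain $t_x<t_{x'_1}<t_1\le t'_1<t_{x''_1}<t_2$ follows as in Claim \ref{cl_2}: the inner strict inequalities because $x_1=\gamma(t_1)=\gamma(t'_1)\notin\partial B_d(x_1,\delta_1)$, and the outer ones because $x$ and $x_2$ lie in balls disjoint from $B_d(x_1,\delta_1)$ (using $\delta_1<\frac12 d(x_1,x)$ and $\delta_1<\frac12 d(x_1,x_2)$). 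Finally (iii) holds since $(t_x,t_{x'_1})\subset(t_x,t_1)$ and $\gamma((t_x,t_1))\cap M=\emptyset$ by Claim \ref{cl_1}(vii), and (iv) is the path-connectedness of $\partial B_d(x_1,\delta_1)\subset U\setminus M$ supplied by the hypothesis.

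For the inductive step, assuming the data for $1,\dots,k$ are built, I would set $\delta^*_{x_{k+1}}=\frac12\min\{\delta_{x_{k+1}},d(x_{k+1},x''_k)\}$, which is positive because $x_{k+1}\in B_d(x_{k+1},\delta_{x_{k+1}})$ and $x''_k\in\partial B_d(x_k,\delta_k)\subset B_d(x_k,\delta_{x_k})$ while these two balls are disjoint by the preliminary remark. One then checks $\delta^*_{x_{k+1}}\le\frac12\min\{\epsilon_{x_{k+1}},d(x_{k+1},x''_k)\}$ and, via the isolation radius, $\delta^*_{x_{k+1}}\le\frac12\min\{\epsilon_{x_{k+1}},d(x_{k+1},x_{k+2})\}$, and applies Proposition \ref{main_prop} on $\gamma([t_{x''_k},t_{k+1}])$ (reference $x''_k$) and on $\gamma([t'_{k+1},t_{k+2}])$ (reference $x_{k+2}$). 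Fixing $\delta_{k+1}\in(0,\delta^*_{x_{k+1}})$ and defining $t_{x'_{k+1}},t_{x''_{k+1}}$ as the corresponding infimum and supremum, the verification of (i)–(iv) is the finite-case argument word for word: $t_{x''_k}<t_{x'_{k+1}}$ and $t_{x''_{k+1}}<t_{k+2}$ from disjointness of consecutive balls, $t_{x'_{k+1}}<t_{k+1}\le t'_{k+1}<t_{x''_{k+1}}$ from $x_{k+1}\notin\partial B_d(x_{k+1},\delta_{k+1})$ and Claim \ref{cl_1}(iii), and $\gamma((t_{x''_k},t_{x'_{k+1}}))\cap M=\emptyset$ from $(t_{x''_k},t_{x'_{k+1}})\subset(t'_k,t_{k+1})$ and Claim \ref{cl_1}(vii).

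The hard part is not any single estimate but the bookkeeping. Unlike the finite case there is now never a terminal index whose right-hand reference point is $y$: every right application points toward the next centre $x_{k+2}$, so I must confirm that the forward separations are genuinely recovered from the isolation of the points of $M$ rather than from the symmetric $r_k$ available before. I would stress that Claim \ref{cl_3} is a statement for each \emph{fixed finite} $k$, so the infinitude of $M$ plays no role within this claim; it is only afterwards, when the tail of the construction must be joined to $y$, that the closedness of $M$, the accumulation behaviour of $(x_k)$, and the $(U\text{-}M)$ property of Theorem \ref{main_theorem_1_b} will be needed.
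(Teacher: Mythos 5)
Your proposal is correct and follows exactly the route the paper intends: the paper omits the proof of Claim \ref{cl_3}, declaring it ``very similar'' to that of Claim \ref{cl_2}, and your induction reproduces that scheme faithfully (Proposition \ref{main_prop} on each side of every centre, infimum/supremum hitting times, strictness of inequalities from $x_k\notin\partial B_d(x_k,\delta_k)$, and Claim \ref{cl_1}(vii) for avoidance of $M$ between consecutive balls). Your one genuine addition --- noting that the one-sided definition of $r_k$ in the case $k^*=\infty$ forces the forward separations to be recovered from the isolation radii $\tilde\delta_{x_k}$ via $B_d(x_k,\tilde\delta_{x_k})\cap M=\{x_k\}$, and re-checking the disjointness of the balls $B_d(x_i,\delta_{x_i})$ accordingly --- is precisely the detail the paper's omission glosses over, and you handle it correctly.
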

  We omit the proof of the above Claim, since is very similar with the proof of the Claim \ref{cl_2}.
\\
We remind that in a general metric space holds\footnote{Let $ x_0\in X $, $ (x_j)_{j\in\N}\subset A, $ such that $ x_j\xrightarrow{d} x_0$. Therefore, $ x_0\in\bar{A}. $ Since $ A $ is closed, it follows that $ \bar{A}=A $ and as a result $ x_0\in A=iso(A). $ From the last, there exists $ \tilde{\delta}_{x_0}>0,\ B_d(x_0,\tilde{\delta}_{x_0})\cap A=\{x_0\} $. From the convergence of $ (x_j)_{j\in\N} $ to $ x_0, $ there exists $ j_0\in\N $ such that $ \forall j\geq j_0,\ x_j\in B_d(x_0,\tilde{\delta}_{x_0})\cap A=\{x_0\} $. I.e $ x_j=x_0,\ \forall j\geq j_0 $ where the last is a contradiction since $ x_i\neq x_l,\ \forall i,l\in\N, i\neq l . $ } the following:\\
\textit{Let $ (X,d) $ be a metric space, $ A\subset X, $ closed with $ iso(A)=A. $ If $ (x_j)_{j\in\N}\subset A $ with $ x_i\neq x_l,\ \forall i,l\in\N, i\neq l $, then for all $ x_0\in X,\ x_j\nrightarrow x_0 $}.\\
As a consequence from the above result, we conclude that $ A:=\{x_k:\ k\in\N\} $ is closed in $(X,d).  $ Moreover, since $ A\subset\gamma\left([t_x,t_y] \right),  $ where $ \gamma\left([t_x,t_y] \right)  $ is compact as a continuous image of a compact set, we conclude that $ A $ is also compact as a closed subset of a compact set. Furthermore, since $ \{x^*\} $ is a compact set, (as a finite), from a known result in metric spaces, it follows that 
\begin{gather}
	\exists k_0\in\N,\ dist(x^*, A):=\inf\left\lbrace d(x^*,s):\ s\in A\right\rbrace =d(x^*,x_{k_0}). 
	\end{gather}
Due to the fact that the sequence  $ (x_k)_{k\in\N}  $ consists of discrete points (i.e $ \forall i,l\in\N, i\neq l,\ x_i\neq x_l $) combined with $ (U-M) $ property of the set $ U ,$ we conclude that there exists a continuous path $ \gamma^*_{x_{k_0},x^*}:[t'_{k_0},t^*]\to U $ such that, $ x_{k_0}=\gamma^*_{x_{k_0},x^*}(t'_{k_0}) $ and $ x^*=\gamma^*_{x_{k_0},x^*}(t^*) $, with $ \gamma^*_{x_{k_0},x^*}\left(( t_{x'_{k_0}},t^*)\right)\cap M=\emptyset. $
 	
By the selection of $ \delta_{k_0} $ we obtain that $ \delta_{k_0}<\frac{1}{2}\min\{\epsilon_{x_{k_0}},d(x_{k_0},x_{k^*})\}$. Then from Proposition \ref{main_prop} it holds that 
\begin{gather}
	\emptyset\neq\tilde{K}''_{x_{k_0}}:=\gamma^*_{x_{k_0},x^*}([t'_{k_0},t^*])\cap \partial B_d(x_{k_0},\delta_{k_0}).
\end{gather}
	Therefore, there exists, $ \tilde{z}''_{k_0}\in[t'_{k_0},t^*],\ \gamma^*_{x_{k_0},x^*}(\tilde{z}''_{k_0})\in \partial B_d(x_{k_0},\delta_{k_0}) $.
	We define,
	\begin{gather}
		\tilde{t}_{x_{k_0}}:=\sup\left\lbrace t\in[t'_{k_0},t^*]:\ \gamma^*_{x_{k_0},x^*}(t)\in\partial B_d(x_{k_0},\delta_{k_0}) \right\rbrace\nonumber\\
		\tilde{x}''_{k_0}:=\gamma^*_{x_{k_0},x^*}(\tilde{t}_{x_{k_0}})\nonumber.
	\end{gather}
As we can easilly see, $ \tilde{t}_{x_{k_0}} $ is well defined and moreover from the continuity of $ \gamma^*_{x_{k_0},x^*} $ and the fact that $ \partial B_d(x_{k_0},\delta_{k_0})  $ is closed, we obtain that $ \tilde{x}''_{k_0}\in \partial B_d(x_{k_0},\delta_{k_0})\subset U\setminus M. $ Using now the information of the theorem, we obtain that the boundary of the ball $  B_d(x_{k_0},\delta_{k_0}) $ is path connected. Therefore, there exists a continuous function $ \tilde{\gamma}_{x'_{k_0},\tilde{x}''_{k_0}}:[t_{x'_{k_0}},\tilde{t}_{x_{k_0}}]\to\partial B_d(x_{k_0},\delta_{k_0})\subset U\setminus M, $ with $ x'_{k_0}= \tilde{\gamma}_{x'_{k_0},\tilde{x}''_{k_0}}(t_{x'_{k_0}})$ and $ \tilde{x}''_{k_0}=\tilde{\gamma}_{x'_{k_0},\tilde{x}''_{k_0}}(\tilde{t}_{x_{k_0}}). $\\
Due to the fact that, $ x^*\in M $, by the assumptions of theorem, there exist $ \epsilon_{x^*}>0,\tilde{\delta}_{x^*}>0 $ such that $ B_d(x^*,\epsilon_{x^*})\subset U $ and $ B_d(x^*,\tilde{\delta}_{x^*})\cap M=\{x^*\} $.
We define:
\begin{gather}
	\delta_{x^*}:=\frac{1}{2}\min\left\lbrace\epsilon_{x^*},\tilde{\delta}_{x^*},d(x^*,x),d(x^*,y),d(x^*,x_{k_0}) \right\rbrace\nonumber\\
	\delta^*_{x^*}:=\frac{1}{2}\min\{\delta_{x^*},d(x^*,\tilde{x}''_{k_0})\}.
\end{gather} 
We observe that $\delta^*_{x^*}>0  $. Indeed, $ d(x^*,\tilde{x}''_{k_0})>0 $ since,
\begin{gather}
	\tilde{x}''_{k_0}\in\partial B_d(x_{k_0},\delta_{k_0})\subset \bar B_d(x_{k_0},\delta_{k_0})\subset B_d(x_{k_0},\delta_{x_{k_0}})\subset B_d(x_{k_0},\frac{1}{2}d(x_{k_0},x^*))\ \text{and}\ x^*\in B_d(x^*,\frac{1}{2}d(x^*,x_{k_0}))\nonumber
\end{gather}
and $B_d(x_{k_0},\frac{1}{2}d(x_{k_0},x^*))\cap B_d(x^*,\frac{1}{2}d(x^*,x_{k_0}))=\emptyset.  $ Therefore, $  d(x^*,\tilde{x}''_{k_0})>0. $ Furthermore, we observe that 
\begin{gather}
	\delta^*_{x^*}\leq\frac{1}{2}\min\{\epsilon_{x^*},d(x^*,\tilde{x}''_{k_0})\}\ \text{and}\ \delta^*_{x^*}\leq\frac{1}{2}\min\{\epsilon_{x^*},d(x^*,y)\}.
\end{gather}
From the Proposition, \ref{main_prop}, we receive that $ \forall\delta\in(0,\delta^*_{x^*}]$:
\begin{gather}
\emptyset\neq K'_{x^*}:=\gamma^*_{x_{k_0},x^*}\left([\tilde{t}_{x_{k_0}},t^*] \right) \cap\partial B_d(x^*,\delta)\ \text{and}\ \emptyset\neq K''_{x^*}:=\gamma\left(t^*,t_y \right)\cap \partial B_d(x^*,\delta) 
\end{gather}
We fix such $ \delta^{(x^*)}\in (0,\delta^*_{x^*}). $ As a result from the last, 
\begin{gather}
	\exists z'_1\in[\tilde{t}_{x_{k_0}},t^*]:\ \gamma^*_{x_{k_0},x^*}(z'_1)\in\partial B_d\left( x^*,\delta^{(x^*)}\right) ,\ \exists z'_2\in[t^*,t_y]:\ \gamma(z''_2)\partial B_d\left( x^*,\delta^{(x^*)}\right) \nonumber.
\end{gather}
Next, we define, 
\begin{gather}
	t_{(x^*)'}:=\inf\left\lbrace t\in[\tilde{t}_{x_{k_0}},t^*]:\ \gamma^*_{x_{k_0},x^*}(t)\in\partial B_d\left( x^*,\delta^{(x^*)}\right) \right\rbrace \nonumber\\
	t_{(x^*)''}:=\sup\left\lbrace t\in[t^*,t_y]:\ \gamma(t)\in\partial B_d\left( x^*,\delta^{(x^*)}\right)  \right\rbrace\nonumber\\
	\text{We set}\ (x^*)':=\gamma^*_{x_{k_0},x^*}\left( t_{(x^*)'}\right) ,\ (x^*)'':=\gamma\left( t_{(x^*)''}\right) \nonumber.
\end{gather}
From the above, we conclude that $t_{(x^*)'},\ t_{(x^*)''}  $ are well defined and by continuity of $ \gamma^*_{x_{k_0},x^*},\ \gamma $ and the fact that $ \partial B_d\left( x^*,\delta^{(x^*)} \right)  $ we receive also that  $ (x^*)',\ (x^*)''\in \partial B_d\left( x^*,\delta^{(x^*)} \right).  $ Furthermore, $ \gamma^*_{x_{k_0},x^*}\left( [\tilde{t}_{x_{k_0}},t_{(x^*)'}]\right) \cap M=\emptyset $, since $ \gamma^*_{x_{k_0},x^*}\left( [\tilde{t}_{x_{k_0}},t_{(x^*)'}]\right) \cap M\subset \gamma^*_{x_{k_0},x^*}\left( [\tilde{t}'_{k_0},t^*]\right) \cap M=\emptyset $ where the last equality holds by the construction property of $ \gamma^*_{x_{k_0},x^*}. $ Moreover, $ \gamma\left( [t_{(x^*)''},t_y]\right)\cap M=\emptyset,$ since $ \gamma\left( [t_{(x^*)''},t_y]\right)\cap M\subset\gamma\left( (t^*,t_y)\right)\cap M=\emptyset$ where the last holds from Claim \ref{cl_0} (iii).

Finally, we define, for the case where $ k^* =\infty $, the path $ \hat{\gamma}:[t_x,t_y]\to U\setminus M $ as follows:
\begin{equation}
	\hat{\gamma}(t):=\begin{cases}
		\gamma(t),\ & t\in[t_x,t_{x'_1}]\nonumber\\
		\tilde{\gamma}_{x'_1,x''_1}(t),\ & t\in[t_{x'_1},t_{x''_1}]\nonumber\\
		\gamma(t),\ & t\in[t_{x''_1},t_{x'_2}]\nonumber\\
		\vdots\      & \vdots\nonumber\\
		\gamma(t),\ & t\in[t_{x''_{k_0-1}},t_{x'_{k_0}}]\nonumber\\
		\tilde{\gamma}_{x'_{k_0},\tilde{x}''_{k_0}}(t),\ & t\in[t_{x'_{k_0}},\tilde{t}_{x_{k_0}}]\nonumber\\
			\gamma^*_{x_{k_0},x^*}(t),\ & t\in[\tilde{t}_{x_{k_0}},t_{(x^*)'}]\nonumber\\
			\tilde{\gamma}_{(x^*)',(x^*)''}(t),\ & t\in[t_{(x^*)'},t_{(x^*)''}]\nonumber\\
			\gamma(t),\ & t\in[t_{(x^*)''},t_y]\nonumber.
	\end{cases}
	\end{equation}

\end{proof}


	\begin{tabular}{l}
		Savvas Andronicou\\ University of Cyprus \\ Department of Mathematics \& Statistics \\ P.O. Box 20537\\
		Nicosia, CY- 1678 CYPRUS
		\\ {\small \tt andronikou.savvas@ucy.ac.cy}
	\end{tabular}
	\begin{tabular}{lr}
		Emmanouil Milakis\\ University of Cyprus \\ Department of Mathematics \& Statistics \\ P.O. Box 20537\\
		Nicosia, CY- 1678 CYPRUS
		\\ {\small \tt emilakis@ucy.ac.cy}
	\end{tabular}
	

\begin{thebibliography}{99999}
	\bibitem{AM24} Andronicou, S., Milakis, E. Estimates on nodal domains of solutions and their applications to free boundary problems, In preparation.
	\bibitem{BBT08} Bruckner, A. M., Thomson, B. S. Classical Real Analysis. (2008), Second Edition
	\bibitem{C52}Caccioppoli, R. Misura e integratione sugli insiemi dimensionalmente orientati, Rend. Accad. Lincei, Cl. Sc. fis. mat. nat., Sirie VIII, fasc 1, 2 (1952), 3-11, 137-146. 
	\bibitem{CR76} Caffarelli, L. A., Riviere, N.M. On the rectifiability of domains with finite perimeter Ann. Scuola Norm. Sup. Pisa 3, no 2 (1976), 177-186.
	\bibitem{DG54} De Giorgi, E. Su una teoria generale dell misura (r-1)-dimensionale in uno spazio ad r dimensioni, Annali do Matematica pura ed applicata, Serie IV, 36 (1954), 191-213. 
	\bibitem{E89} Engelking, R. (1989). General Topology. Sigma series in pure mathematics. Heldermann, 1989.
	\bibitem{F99} Folland, G.B. Real Analysis: Modern Techniques and Their Applications. Pure and Applied Mathematics, Wiley, 2nd edition, 1999.
	\bibitem{G07}Gong, X. H. Connectedness of the solution sets and scalarization for vector equilib-
rium problems, J. Optim. Theory Appl., 133 (2007), 151-161.
         \bibitem{LM11} Lemenant, A., Milakis, E. A stability result for nonlinear Neumann problems in Reifenberg flat domains in  $\R^N$  Publ. Mat. 55 (2011), no. 2, 413-432.
	 \bibitem{LMS13} Lemenant, A., Milakis, E., Spinolo, L. V. Spectral stability estimates for the Dirichlet and Neumann Laplacian in rough domains, J. Funct. Anal. 264 (2013), no. 9, 2097-2135.
	\bibitem{YY23}Li, Y. Y.,  Yan, X. Anisotropic Caffarelli-Kohn-Nirenberg type inequalities, Advances in Mathematics, Volume 419, 2023, 108958
	\bibitem{LL22} Lin, F., Lin, Z. Boundary Harnack principle on nodal domains. Sci. China Math. 65 (2022), 2441-2458.
	\bibitem{T21} Toland, J. F. Path-connectedness in global bifurcation theory, Electron. Res. Arch. 29 (2021), no. 6, 4199-4213.
	\bibitem{ZHW09} Zhong, R., Huang, N. Wong, M. Connectedness and path-connectedness of solution sets to symmetric vector equilibrium problems. Taiw. Journal of Math. Vol. 13, (2009) No. 2B, pp. 821-836.
	\end{thebibliography}
\end{document}